\documentclass[12pt]{article}
\usepackage{float}
\usepackage{makeidx}
\usepackage{latexsym}

\usepackage[margin=2.5cm]{geometry}
\usepackage[dvips]{graphicx}

\usepackage[english]{babel}
\usepackage{amssymb}
\usepackage{amsmath}
\usepackage{amsthm}
\usepackage{lineno}
%\linenumbers
%\usepackage[retainorgcmds]{IEEEtrantools}
%\usepackage{fancybox}
\usepackage{xcolor}
\definecolor{blau}{rgb}{0.1,0.0,0.9}
\definecolor{gruen}{cmyk}{1.0,0.2,0.7,0.07}
\definecolor{mag}{cmyk}{0.0,0.9,0.3,0.0}

% smilies

%\theoremstyle{plain}
%\newtheorem{algorithm}{Algorithm}
\newtheorem{theorem}{Theorem}[section]
\newtheorem{lemma}[theorem]{Lemma}

\newtheorem{conjecture}[theorem]{Conjecture}

\newtheorem{property}[theorem]{Property}
\theoremstyle{definition}

\newtheorem{definition}[theorem]{Definition}

\begin{document}
\date{\today}
\title{Latin cubes of even order with forbidden entries}

\author{
{\sl Carl Johan Casselgren}\footnote{Department of Mathematics, Link\"oping University, SE-581 83 Link\"oping, Sweden. \newline
{\it E-mail address:} carl.johan.casselgren@liu.se  \, Casselgren was supported by a grant from the Swedish
Research Council (2017-05077)}
\and {\sl Lan Anh Pham}\footnote{Department of Mathematics, 
Ume\aa\enskip University, 
SE-901 87 Ume\aa, Sweden.
{\it E-mail address:} lan.pham@umu.se
}
}
\maketitle

\bigskip
\noindent
{\bf Abstract.}
We consider the problem of constructing Latin cubes subject to the condition
that some symbols may not appear in certain cells. 
We prove that there is
a constant $\gamma > 0$ such that if $n=2t$ and $A$ is a
$3$-dimensional $n\times n\times n$ array where every cell contains at most
$\gamma n$ symbols, and every symbol occurs at most $\gamma n$ times in 
every line of $A$, then $A$ is {\em avoidable}; that is, there is
a Latin cube $L$ of order $n$ such that for every $1\leq i,j,k\leq n$,
the symbol in position $(i,j,k)$ of $L$ does not appear in the corresponding
cell of $A$.

\bigskip

\noindent
\small{\emph{Keywords: Latin cube, Latin square, list coloring}}

\section{Introduction}
Consider an $n \times n$ array $A$ in which
every cell $(i,j)$
contains a subset $A(i,j)$ of the symbols in $[n]=\{1, \dots,n \}$.
If every cell contains at most $m$ symbols, and every symbol occurs
at most $m$ times in every row and column, then $A$ is an {\em $(m,m,m)$-array}.
Confirming a conjecture by H\"aggkvist \cite{Haggkvist}, it was proved in
\cite{AndrenCasselgrenOhman} that there is a constant $c>0$ such that if
$m \leq cn$ and $A$ is an $(m,m,m)$-array, then $A$
 is {\em avoidable};
that is, there is a Latin square $L$ such that for every $(i,j)$ the symbol in
position $(i,j)$ in $L$ is not in $A(i,j)$
(see also \cite{Lina, AndrenCasselgrenMarkstrom}).
In this paper we consider the corresponding problem for Latin cubes.

In general, a {\em cube} is just a
$3$-dimensional array having
layers stacked on top of each other.
Now, a cube has {\em lines} in three directions
obtained from fixing two coordinates
and allowing the third to vary. The lines obtained by varying the 
first, second, and third coordinates will be 
referred to respectively as {\em columns}, {\em rows}, and {\em files}. 
The first,
second, and third coordinates themselves will be referred to
as the indices of the rows, columns, and files.

A \textit{Latin cube} $L$ of order $n$ on the symbols $\{1,\dots,n\}$ 
is an $n \times n \times n$ cube  such that each symbol in $\{1,\dots,n\}$ appears
exactly once in each row, column and file. The symbol in position $(i,j,k)$
of $L$ is denoted by $L(i,j,k)$.    Latin cubes have been studied by a number of authors, both with respect to enumeration
and e.g. extension from partial cubes.  An extensive survey of early results can be found in \cite{MckWan}.

An  $n \times n \times n$ cube where 
each cell contains a subset of the symbols in the set $\{1,\dots,n\}$ is called an 
\textit{$(m,m,m,m)$-cube (of order $n$)} if the following conditions hold:
\begin{itemize}
\item[(a)] No cell contains a set with more than $m$ symbols. 
\item[(b)] Each symbol occurs at most $m$ times in each row.
\item[(c)] Each symbol occurs at most $m$ times in each column.
\item[(d)] Each symbol occurs at most $m$ times in each file.
\end{itemize}

Let $A(i,j,k)$ denote the set of symbols in the cell $(i,j,k)$ of $A$. 
If we simplify notation, and write $A(i,j,k)=q$ if the set of 
symbols in cell $(i,j,k)$ of $A$ is $\{q\}$, then 
a $(1,1,1,1)$-cube is a {\em partial Latin cube},
and a \textit{Latin cube} $L$ is simply a $(1,1,1,1)$-cube
with no empty cell.

Given an $(m,m,m,m)$-cube $A$ of order $n$, a Latin cube $L$
of order $n$ \textit{avoids} $A$ if
there is no cell $(i,j,k)$ 
of $L$ such that $L(i,j,k) \in A(i,j,k)$;
if there is such a Latin cube, then
$A$ is \textit{avoidable}. 

Problems on extending partial Latin cubes have been studied for a long time, with the earliest results appearing in the 1970s \cite{Cruse}; in the more recent literature we have  \cite{Bri, Bryant,KuhlDenley,DenleyOhman}. The study of the more general problem of constructing Latin cubes subject to the condition that some
symbols cannot appear in certain cells was
quite recently intiated in \cite{JohanKlasLan}:
building on earlier research \cite{AndrenCasselgrenOhman, Lina, Andren2010latin},
the authors of the present paper along with Markstr\"om obtained an analogue
of the result of \cite{Lina}, which considered Latin squares, for Latin cubes:

\begin{theorem}
\label{th:prev}
\cite{JohanKlasLan}
	There is a constant $\gamma > 0$ such that for every $n=2^t$ $(t \geq 30)$ any
	$(\gamma n, \gamma n, \gamma n, \gamma n)$-cube of order $n$
is avoidable.
\end{theorem}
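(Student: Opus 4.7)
The power-of-two hypothesis $n=2^t$ naturally suggests a recursive doubling strategy, so I would attempt a proof by induction on $t$, relying at the bottom on the two-dimensional result of \cite{Lina} for Latin squares. The key combinatorial ingredient is a block decomposition of a Latin cube of order $n=2n'$: partition the rows, columns, files, and symbols into halves $R_0 \cup R_1$, $C_0 \cup C_1$, $F_0 \cup F_1$, $S_0 \cup S_1$, each of size $n'$, and assign to the cell block $R_r \times C_c \times F_f$ the symbol half $S_{r \oplus c \oplus f}$. Since every row, column, or file of the full cube passes through exactly two blocks whose assigned symbol halves are complementary, filling each block with a Latin cube of order $n'$ on its assigned symbol half automatically yields a Latin cube of order $n$.

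For the inductive step, given an $(\gamma n, \gamma n, \gamma n, \gamma n)$-cube $A$ of order $n$, I would choose the halvings of rows, columns, files, and symbols uniformly at random. For each of the eight blocks $(r,c,f) \in \{0,1\}^3$ I form the restricted sub-array $A_{r,c,f}$ consisting of the cells in $R_r \times C_c \times F_f$ with forbidden symbols restricted to $S_{r \oplus c \oplus f}$. Each of the four quantities defining the $(m,m,m,m)$-cube property of $A_{r,c,f}$ is a count over a random halving whose expectation is $\gamma n/2 = \gamma n'$, so standard Chernoff--Hoeffding concentration combined with a union bound over all $O(n^3)$ relevant cell and symbol-line events shows that, with positive probability, every $A_{r,c,f}$ is a $(\gamma' n', \gamma' n', \gamma' n', \gamma' n')$-cube of order $n'$ for some $\gamma'$ only marginally above $\gamma$. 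The inductive hypothesis applied to each of the eight sub-problems then produces eight Latin cubes of order $n'$, which combine via the block decomposition into a Latin cube of order $n$ avoiding $A$.

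For the base case $t = 30$, one reduces to the Latin square avoidance theorem of \cite{Lina}. One natural approach is to view the cube as a stack of $n$ horizontal layers and complete them sequentially: each layer is a partial Latin square problem avoiding both the restriction of $A$ to that layer and the residual constraints inherited from previously completed layers, recorded as remaining symbol quotas in each row and column. With $n = 2^{30}$ sufficiently large, these accumulated constraints remain sparse enough to fit the hypotheses of the Latin square result at every layer.

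The main obstacle is controlling the constant $\gamma$ across the $\Theta(\log n)$ levels of the recursion. Each inductive step multiplies $\gamma$ by a factor $(1+\epsilon_k)$, where $\epsilon_k$ depends on the current scale $n_k = n/2^k$ and can be made small only when $\gamma n_k$ is large compared to $\log n_k$. Bounding $\prod_k (1+\epsilon_k)$ by a constant requires terminating the recursion well before $n_k$ becomes too small; this is precisely the role of the threshold $t \geq 30$, which ensures that the base scale is large enough for the accumulated multiplicative error to stay below a fixed constant and for $\gamma$ to be chosen small enough to absorb the growth. Making this balance work out cleanly for every $(r,c,f)$ at every level of the recursion, with errors that do not compound catastrophically, is the most technical part of the argument.
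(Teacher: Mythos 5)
Your proposal takes a genuinely different route from the method of \cite{JohanKlasLan} that this paper relies on (Theorem~\ref{th:prev} is cited, not re-proved, but Theorem~\ref{maintheorem} generalizes it by the same technique). That technique starts from an explicit \emph{starting Latin cube} of order $n$, randomly permutes its row/column/file layers and symbols so that the resulting cube has few, well-spread conflicts with $A$, and then eliminates each remaining conflict by swapping on a suitable allowed subcube (an order-$2$ intercalate). Your alternative --- induction on $t$ via a Kronecker-style block decomposition with the eighth-blocks indexed by the parity $r\oplus c\oplus f$, plus random halvings of rows, columns, files, and symbols for the inductive step, bottoming out in the Latin-square avoidance theorem of \cite{Lina} --- is a natural idea given the $n=2^t$ hypothesis. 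The block decomposition itself is sound (each line of the big cube passes through exactly two blocks whose assigned symbol halves are complementary), and the hypergeometric concentration plus union-bound argument for controlling the restricted sub-arrays in the inductive step is plausible in outline, with the multiplicative loss $\prod_k(1+\epsilon_k)$ bounded because $\epsilon_k\sim\sqrt{\log n_k/(\gamma n_k)}$ decays geometrically along the recursion.

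The fatal gap is the base case. Filling the cube of order $2^{30}$ one file layer at a time, each time invoking the Latin-square avoidance theorem, does not work because the constraints inherited from earlier layers are not sparse. When you fill layer $k$, the forbidden set at each cell $(i,j)$ contains the $k-1$ symbols already placed at $(i,j)$ in layers $1,\dots,k-1$, and each symbol $s$ is forbidden in exactly $k-1$ cells of every row and column of layer $k$ (one per previously filled layer, since each earlier layer is a Latin square). Once $k$ exceeds a small constant fraction of $n$, both the cell-wise and line-wise forbidden-symbol counts are of order $n$, far above the $c n$ threshold required by \cite{Lina}. What you actually face in the later layers is a Latin-square \emph{completion} problem with dense, highly structured forbidden sets --- essentially completing a thick slab of a partial Latin cube --- which is a much harder problem than sparse avoidance and is not covered by the result you invoke. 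Your recursion cannot be pushed to a trivial base (your own $\epsilon_k$ diverges as $n_k$ shrinks, so $\gamma n_k\gg\log n_k$ is required), so a genuine base case at some fixed large order is unavoidable, and establishing it appears to require something like the starting-cube plus subcube-swap machinery of \cite{JohanKlasLan} in any case.
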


In general we conjecture that the following is true:

\begin{conjecture}
\label{conj:main}
	There is a constant $\gamma >0$ such that for every positive integer $n$ 
	any $(\gamma n, \gamma n, \gamma n, \gamma n)$-cube of order $n$ is avoidable.
\end{conjecture}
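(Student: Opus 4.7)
The plan follows the natural split into even and odd $n$. For even $n$, the main theorem of this paper already establishes the result, so the remaining task is to handle odd $n$. I would attempt two complementary strategies and hope that one can be made to work.

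The first strategy is an embedding argument. Given an $(\gamma n,\gamma n,\gamma n,\gamma n)$-cube $A$ of odd order $n$, extend $A$ to an $(\gamma' n',\gamma' n',\gamma' n',\gamma' n')$-cube $A'$ of even order $n'$ (say $n'=n+1$ or $n'=2n$), placing the original $A$ in a corner subcube of $A'$ and adding carefully designed forbidden entries in the remaining cells so that every Latin cube $L'$ of order $n'$ avoiding $A'$ restricts on the corner to a Latin cube of order $n$ on the symbols $\{1,\dots,n\}$ avoiding $A$. One then applies the main theorem of this paper to $A'$ with $\gamma'$ suitably smaller than $\gamma$, recovering the desired avoiding cube for $A$. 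The delicate point is to control the forbidden sets outside the corner so that $A'$ remains an $(m',m',m',m')$-cube with $m'\le \gamma' n'$ while still forcing the corner of $L'$ to have the desired form.

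The second strategy is a direct adaptation of the probabilistic argument behind the main theorem. That argument presumably starts from a backbone Latin cube of even order (built from a group such as $\mathbb{Z}_2\times\mathbb{Z}_t$ that has a convenient subgroup of index $2$), applies random permutations to rows, columns, files, and possibly symbols, and uses concentration to show that with positive probability no forbidden entry is hit. For odd $n$ I would substitute a backbone cube of odd order, for instance the Cayley table of $\mathbb{Z}_n$, and rerun the concentration analysis; the counting of ``bad'' events will have to be redone for this new backbone.

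The hard part, common to both strategies, is that the even-order proof seems to exploit a $2$-to-$1$ structure on coordinates or symbols (a subgroup of index $2$ or a pairing of indices) that is simply not present when $n$ is odd. In the embedding approach this obstacle appears as the difficulty of manufacturing the extra forbidden entries without exceeding the $(m,m,m,m)$ bounds while still forcing the sub-Latin-cube structure; in the direct approach it appears as the loss of symmetry of the backbone cube, which makes the concentration step considerably harder to close. I expect that resolving this structural gap is where a genuinely new idea, going beyond the techniques of this paper and of \cite{JohanKlasLan}, will be required, which is presumably why the statement remains a conjecture.
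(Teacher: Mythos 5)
The statement you are asked to prove is labelled as a conjecture in the paper, and the paper does not claim a proof of it: Theorem~\ref{maintheorem} settles only the even case $n=2t$, and the odd case is left open. Your proposal correctly identifies this state of affairs, and it is really a discussion of obstacles rather than a proof; you say so yourself in the final paragraph. There is therefore no ``paper's own proof'' to compare against, and the right assessment is that the gap is genuine and acknowledged.

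A few comments on the two strategies you sketch. The embedding strategy as stated has a concrete problem: to force the corner $n\times n\times n$ block of an avoiding Latin cube $L'$ of order $n'$ to be a Latin cube on $\{1,\dots,n\}$, you must prevent every symbol in $\{n+1,\dots,n'\}$ from appearing in $n^3$ corner cells. With $n'=n+1$ that requires forbidding one symbol from each corner cell, which is fine per cell, but the induced array must also satisfy the row/column/file occurrence bound; even more severely, you must force the complementary region of $L'$ to absorb all large symbols, and the lists needed there to achieve this grow linearly in $n'$ with a constant that cannot be pushed below $1$, so $A'$ will not be a $(\gamma' n',\gamma' n',\gamma' n',\gamma' n')$-cube for any small $\gamma'$. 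This is essentially the same reason that completing a partial Latin cube from a corner block is hard, and it is not merely a matter of choosing the extra entries ``carefully.'' The direct strategy is closer to what would actually be needed, and your diagnosis is the right one: the proof for even $n$ leans heavily on the abundance of order-$2$ subcubes (Property~\ref{tsubcube} gives $t$ of them through every cell) together with the two-class symbol structure $\mathcal{S}_1,\mathcal{S}_2$; for odd $n$, a Cayley-table backbone over $\mathbb{Z}_n$ has no intercalates at all, so the entire local-swap machinery collapses and one would have to build an analogous theory of larger trades from scratch. Since neither strategy is carried through and the second one requires ideas not present in the paper, the proposal does not constitute a proof; it is, however, an accurate appraisal of why Conjecture~\ref{conj:main} remains open.
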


If true, then $\gamma \leq 1/3$, as follows by an example from \cite{JohanKlasLan}.

In this paper we consider the problem of avoiding $(m,m,m,m)$-cubes of even order;
generalizing Theorem \ref{th:prev} we confirm Conjecture
\ref{conj:main} for the case $n=2t$.
Our main result is the following, which establishes an analogue of
a main result of \cite{Andren2010latin}, 
which considered Latin squares, for Latin cubes.

\begin{theorem}
\label{maintheorem}
There is a positive constant $\gamma$ such that if $t \geq 2^{30}$ and 
$m \leq \gamma 2t$, then any $(m,m,m,m)$-cube $A$ of order $2t$ is avoidable.
\end{theorem}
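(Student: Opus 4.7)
The plan is to reduce to the power-of-two case of Theorem \ref{th:prev}. Given $n = 2t$ with $t \geq 2^{30}$, let $n_0 = 2^a$ be the largest power of two satisfying $n_0 \leq n$, so that $a \geq 31$ and $n_0 \leq n < 2n_0$. I would first apply uniformly random permutations $\sigma, \tau, \rho \in S_n$ to the rows, columns, and files of $A$, producing a randomized $(m,m,m,m)$-cube $A'$ of order $n$. By Chernoff/Azuma-type concentration and a union bound over the $O(n^2)$ lines of the inner sub-cube $[n_0]^3$, with positive probability every symbol appears at most $m\, n_0/n + O(\sqrt{m n_0 \log n})$ times in each such inner line. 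Restricting the symbol set to $[n_0]$ (which only decreases the number of forbidden symbols per cell) then yields an $(m_0, m_0, m_0, m_0)$-cube of order $n_0$ on $[n_0]$ with $m_0 \leq \gamma_0 n_0$, where $\gamma_0$ is the constant from Theorem \ref{th:prev}, provided $\gamma$ is chosen small enough in terms of $\gamma_0$.

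Theorem \ref{th:prev} then supplies a Latin cube $L_0$ of order $n_0$ on the symbol set $[n_0]$ that avoids this inner restriction of $A'$. The remaining task is to extend $L_0$ to a Latin cube $L$ of order $n$ on $[n]$ that avoids $A'$ on the boundary cells $\{(i,j,k) : \max\{i,j,k\} > n_0\}$. This boundary splits naturally into three ``slabs'' of shape $n_0 \times n_0 \times (n - n_0)$ (together with their symmetric analogues), three ``edge'' pieces of shape $n_0 \times (n-n_0) \times (n-n_0)$, and one small corner of shape $(n-n_0)^3$.

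To perform the extension I would fill the slabs first: in the slab with $k > n_0$ and $i,j \in [n_0]$, each partial file is forced to be a permutation of $\{n_0+1, \dots, n\}$, and the only interaction with the remaining boundary is through the rows and columns indexed by $i, j \in [n_0]$. I would model this as a list-edge-coloring problem, or equivalently a sequence of bipartite perfect matching problems, in which forbidden entries of $A'$ correspond to deleted edges; the sparsity bound $m \leq \gamma n$ should allow a Hall-type argument, or failing that an application of the Lov\'asz Local Lemma on a random filling, to produce the required completion. The edges and corner, containing only $O(n_0 (n - n_0)^2)$ and $(n-n_0)^3 \leq n_0^3$ cells respectively, can then be handled analogously, with more constrained but smaller matching problems.

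The principal obstacle is precisely the extension step. Unlike Latin rectangles, partial Latin cubes do not always extend to Latin cubes, so the structure of $L_0$ must be carefully controlled. A naive application of Theorem \ref{th:prev} need not deliver an $L_0$ admitting a valid extension, since it provides no guarantee on how symbols are distributed across lines that straddle the boundary. I therefore expect the technical heart of the argument to lie in either strengthening Theorem \ref{th:prev} to produce an $L_0$ with additional regularity -- for example, one whose rows, columns, and files are balanced enough that the slab/edge/corner matching problems satisfy the Hall condition throughout -- or in coupling the two steps into a single probabilistic construction of $L$, in which the extension constraints guide the choice of $L_0$ from the outset.
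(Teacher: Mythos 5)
Your proposal takes a completely different route from the paper, and it has a genuine gap that you yourself identify but do not resolve. The paper does \emph{not} reduce to the power-of-two case of Theorem~\ref{th:prev}. Instead, it defines a ``starting Latin cube'' directly for every even order $n=2t$ (Definition~\ref{defSLatincube}), develops structural tools for it (subcubes, transversal-sets, row/file blocks, half column blocks, half transversal blocks, half symbol-row blocks, etc.), and then runs a two-step argument: a probabilistic lemma showing that some permutation of layers and symbols yields a Latin cube isotopic to the starting cube with few conflicts and many allowed subcubes per cell, followed by a greedy construction of a family of disjoint allowed subcubes covering all conflicts, on which one swaps. The whole point of Section~2 is to extend the block/transversal machinery from the $n=2^k$ case in \cite{JohanKlasLan} to arbitrary even $n$, precisely so that no reduction to a smaller cube is needed.

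The gap in your approach is the extension step, and it is not merely a technicality. First, the boundary region is not small: since $n_0$ is the largest power of two with $n_0 \le n$, one can have $n$ close to $2n_0$, so the slabs and edge pieces together contain a constant fraction of the cube (indeed up to $7/8$ of it); there is no asymptotic gain from treating the boundary as a perturbation. Second, filling a slab of shape $n_0 \times n_0 \times (n-n_0)$ subject to Latin constraints in all three directions is not a bipartite matching or list-edge-coloring problem; it is a $3$-dimensional Latin-array completion, equivalent to a perfect matching problem in a $3$-partite $3$-uniform hypergraph, for which there is no Hall-type theorem and which is NP-hard in general. Third, and most fundamentally, partial Latin cubes do not in general extend to Latin cubes, as you note, and Theorem~\ref{th:prev} gives you no control over how the inner cube $L_0$ sits relative to the outer symbols and layers; strengthening Theorem~\ref{th:prev} to produce an extendable $L_0$ that also respects the forbidden sets on the boundary is essentially as hard as the original problem. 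Your own closing paragraph acknowledges that either Theorem~\ref{th:prev} must be strengthened or the two steps coupled, but offers no mechanism for doing so; that missing mechanism is exactly what the paper supplies, and it does so by abandoning the reduction entirely in favor of building the swap machinery directly at order $2t$.
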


To prove this result we need to significantly
extend the machinery from \cite{JohanKlasLan}. 
In Section 2 we give some definitions and preparatory lemmas, generalizing
several tools from \cite{JohanKlasLan}, 
and in Section 3 we prove Theorem \ref{maintheorem}.

Let us finally remark that the main result of this paper, as well as the problem of 
extending 
partial Latin cubes, can be recast as list edge coloring problems on the complete 3-uniform
3-partite hypergraph $K^3_{n,n,n}$.  Problems on extending partial edge colorings for 
ordinary graphs have been studied to some extent, cf. \cite{hypercube, EGHKPS},
but similar problems for hypergraphs remain mostly unexplored.

%%%%%%%%%%%%%%%%%%%%%%%%%%

\section{Definitions and properties of starting Latin cubes}

In this section we give some definitions and collect essential
properties of starting Latin cubes.

Let $A$ be an $n \times n \times n$ cube.
Given $i \in [n]$, \textit{row layer} $i$ in 
$A$ is a set of $n^2$ cells $\{(i,j^*,k^*) : j^* \in [n], k^* \in [n] \}$;
given $j \in [n]$, \textit{column layer} $j$ in 
$A$ is a set of $n^2$ cells $\{(i^*,j,k^*) : i^* \in [n],k^* \in [n]\}$;
given $k \in [n]$, \textit{file layer} $k$  in $A$
is a set of $n^2$ cells $\{(i^*,j^*,k) : i^* \in [n], j^* \in [n]\}$. 
As mentioned above, by fixing two coordinates and varying the third,
we obtain rows, columns and files of a $n \times n \times n$ cube.
Formally we define a row of such a cube $A$ as a set of cells
$R_{i,k} = \{(i,j^*,k) : j^* \in [n]\}$, a column as the set
$C_{j,k} = \{(i^*,j,k) : i^* \in [n]\}$, and files
$F_{i,j} = \{(i,j,k^*) : k^* \in [n]\}$.

In this paper, we shall assume $x \mod t =t$ in the case when 
$x \mod t \equiv 0$.
%Every Latin square is divided into four quadrants $L_{11}$, $L_{12}$,
%$L_{21}$, $L_{22}$ such that 
%\begin{itemize}
%\item[(1)] $L_{11}$ contains all cells $(i,j)$ satisfying $i,j \leq t$.
%\item[(2)] $L_{12}$ contains all cells $(i,j)$ satisfying $i \leq t, j>t$.
%\item[(3)] $L_{21}$ contains all cells $(i,j)$ satisfying $i > t, j \leq t$.
%\item[(4)] $L_{21}$ contains all cells $(i,j)$ satisfying $i, j> t$.
%\end{itemize}
%
The following Latin square seems to have been first employed by H\"aggkvist
and is a useful tool for completing sparse partial Latin squares.
(see e.g. \cite{AndrenCasselgrenMarkstrom}).

\begin{definition} 
\label{defSLsquare}
 For $1 \leq i, j \leq n$, the {\em starting Latin square} of order 
$n=2t$ is defined as follows.

\begin{equation}
L(i,j) = \left\{ \begin{array}{rcl}
j-i +1 \mod t & \mbox{for}  & i,j \leq t,\\
i-j +1 \mod t & \mbox{for} & i, j> t, \\
(j-i+1 \mod t) + t & \mbox{for} & i \leq t, j>t,\\
(i-j+1 \mod t) + t & \mbox{for} & i > t, j \leq t.
\end{array}\right.
\end{equation}
\end{definition}

%\begin{equation}
%L(i,j) = \left\{ \begin{array}{rcl}
%j-i +1 \mod t & \mbox{for}  & i,j \leq t \quad (\mbox{or} \quad (i,j) \in L_{11}),\\
%i-j +1 \mod t & \mbox{for} & i, j> t \quad (\mbox{or} \quad (i,j) \in L_{22}), \\
%(j-i+1 \mod t) + t & \mbox{for} & i \leq t, j>t \quad (\mbox{or} \quad (i,j) \in L_{12}),\\
%(i-j+1 \mod t) + t & \mbox{for} & i > t, j \leq t \quad (\mbox{or} \quad (i,j) \in L_{21}).
%\end{array}\right.
%\end{equation}
%\end{definition}
Define %$\mathcal{S}_1$ and $\mathcal{S}_2$ be the two sets of symbols
%used in the different subsquares of the starting Latin square, that is,
$\mathcal{S}_1=\{1,2,\dots,t\}$ and $\mathcal{S}_2=\{t+1,\dots,n\}$.
We can divide the starting Latin square into four quadrants such that each quadrant corresponds to one of the four cases in 
Definition \ref{defSLsquare}; so the set of symbols in each quadrant 
is $S_1$ or $S_2$;
for example, the set of symbols in the quadrant created by $t^2$ cells 
$(i,j)$ satisfying $i,j \leq t$ is $S_1$.

A {\em $4$-cycle} (or {\em intercalate})
in a Latin square $L$ is a set of four cells 
$\{(i_1,j_1), (i_1,j_2), (i_2,j_1), (i_2,j_2)\}$ such that 
$L(i_1,j_1)=L(i_2,j_2)$ and $L(i_1,j_2)=L(i_2,j_1)$.
We note some important properties of starting Latin squares (cf. \cite{Andren2010latin}, \cite{Padraic}).

\begin{property}
\label{prop:cycles}
Each cell in the $2t \times 2t$
starting Latin square is in $t$ $4$-cycles;
each of these $4$-cycles contains two symbols $s_1 \in \mathcal{S}_1$ and $s_2 \in \mathcal{S}_2$.
Permuting the rows, the columns
or the symbols does not affect the number of $4$-cycles that a cell is part of.
\end{property}

If two quadrants of the starting Latin square contain the same set of symbols, then
they are {\em opposite}, otherwise they are {\em adjacent}.

\begin{property}
\label{pro4cyc}
Each quadrant in the starting Latin square contains exactly one cell
from each of the $t$ $4$-cycles from the previous property.
%If two quadrants contain the same set of symbols, then they are opposite, otherwise, they
%are adjacent. 
Moreover, two cells in the same row $($or the same column$)$, but from 
adjacent quadrants, uniquely determine
a $4$-cycle. 
\end{property}

\begin{property}
\label{two4cycle}
The intersection of two $4$-cycles is either empty, or it contains $1$ or $4$ cells.
\end{property}
Given an integer $n=2t$, and $1 \leq i, j, k \leq n$, 
we define the \textit{starting Latin cube} similarly to the starting Latin square. 
\begin{definition}
\label{defSLatincube}
The {\em starting Latin cube} $L$ of order $n=2t$ on the symbols $\{1,\dots,n\}$ 
is an $n\times n \times n$ Latin cube such that
\begin{equation}
L(i,j,k) = \left\{ \begin{array}{rcl}
j-i +k \mod t & \mbox{for}  & i,j,k \leq t \quad (\textrm{and for} \quad  i,k > t, j\leq t),\\
i-j +k \mod t & \mbox{for} & i, j>t, k\leq t \quad (\textrm{and for} \quad  k, j> t, i\leq t),\\
(j-i+k \mod t) + t & \mbox{for} & i,k \leq t, j>t \quad (\textrm{and for} \quad  i, j,k > t),\\
(i-j+k \mod t) + t & \mbox{for} & k, j \leq t, i > t \quad (\textrm{and for} \quad  i, j \leq t, k> t).\\
\end{array}\right.
\end{equation}
\end{definition}

We can divide the starting Latin cube into eight octants such that each octant corresponds to one of the eight cases in 
Definition \ref{defSLatincube}; so the set of symbols in each octant is $S_1$ or $S_2$.
For example, the set of symbols in the octant created by the $t^3$ cells 
$(i,j,k)$
satisfying $i,j,k \leq t$ is $S_1$.
Given any octant $O_1$, it is easy to check that there is only one octant $O_2$ such that 
for every $(i_1,j_1,k_1) \in O_1$ and every $(i_2,j_2,k_2) \in O_2$ we have $(i_1 - i_2)(j_1 - j_2)(k_1 - k_2) \neq 0$;
$O_1$ and $O_2$ are called {\em opposite} octants. Note that sets of symbols
used in opposite octants are different.

In the following, if two cells are in the same row, column or file layer,
then we just say that they are in the same layer.
The intersection between an octant and a layer in the starting Latin cube
corresponds to a quadrant of the starting Latin square (by possibly relabeling
symbols).
As for the starting Latin square, there are four quadrants in each layer 
and two quadrants in a layer are opposite if they 
contain the same set of symbols; otherwise, they are adjacent.

For future reference, we state the following two important properties
of the starting Latin cube.

\begin{property}
\label{cellproperty}
%For a cell $(i,j,k)$ in $L$, $L(i,j,k) \equiv i-j +k \mod t$ or 
%$L(i,j,k) \equiv %j-i +k \mod t$.
Given two cells $(i_1,j_1,k_1)$ and $(i_1,j_2,k_2)$ in two opposite quadrants
of the starting Latin cube $L$, 
if $L(i_1,j_1,k_1) \equiv i_1-j_1 +k_1 \mod t$, then $L(i_1,j_2,k_2) \equiv j_2-i_1 +k_2 \mod t$;
if  $L(i_1,j_1,k_1) \equiv j_1-i_1 +k_1 \mod t$, then $L(i_1,j_2,k_2) \equiv i_1-j_2 +k_2 \mod t$.
A similar property holds for two cells $(i_1,j_1,k_1)$ and $(i_2,j_2,k_1)$ 
lying in
two opposite quadrants. 

Moreover, for two cells $(i_1,j_1,k_1)$ and $(i_2,j_1,k_2)$ from 
two opposite quadrants, 
if $L(i_1,j_1,k_1) \equiv i_1-j_1 +k_1 \mod t$, then $L(i_2,j_1,k_2) \equiv i_2-j_1 +k_2 \mod t$;
if  $L(i_1,j_1,k_1) \equiv j_1-i_1 +k_1 \mod t$, then $L(i_2,j_1,k_2) \equiv j_1-i_2 +k_2 \mod t$. 
\end{property}

\begin{property}
\label{procubes}
Properties \ref{prop:cycles}, \ref{pro4cyc} and \ref{two4cycle}
hold for all row, column and file layers of the starting Latin cube.
\end{property}

\begin{proof}
By Definitions \ref{defSLsquare} and \ref{defSLatincube}, it is straightforward that every file layer of the starting Latin cube has 
the properties stated in Properties
 \ref{prop:cycles}, \ref{pro4cyc} and \ref{two4cycle}. Consider a row layer $i_1$ and an arbitrary cell $(i_1, j_1,k_1)$. 
Let us consider the case $L(i_1,j_1,k_1) \equiv i_1 -j_1 +k_1 \mod t$
(the case $L(i_1,j_1,k_1) \equiv j_1 -i_1 +k_1 \mod t$ can be done similarly).
In the row layer $i_1$, let $L_1$ be the quadrant that contains $(i_1, j_1,k_1)$ and $L_2$ 
 be the quadrant that is adjacent to $L_1$ and has the same rows as $L_1$;
choose any cell $(i_1,j_2,k_1) \in R_{i_1,k_1} \cap L_2$.
By Definition \ref{defSLatincube} we have that
$L(i_1, j_2,k_1) \equiv i_1 -j_2+k_1 \mod t$.
Choose the cell $(i_1,j_1,k_2)$ such that $L(i_1,j_1,k_2)=L(i_1,j_2,k_1)$;
then $(i_1,j_1,k_2)$ must be in the quadrant
$L_3$ that is opposite to $L_2$, and this implies that 
the cell $(i_1,j_2,k_2)$ must be in the quadrant $L_4$ that is opposite to $L_1$. Using Property
\ref{cellproperty}, we deduce that $L(i_1,j_2,k_2) \equiv j_2 - i_1+k_2 \mod t$ and $L(i_1,j_1,k_2) \equiv j_1 - i_1 + k_2 \mod t$.
Hence, $$L(i_1,j_2,k_2) - L(i_1,j_1,k_1) \equiv (j_2 - i_1+k_2) - (i_1 -j_1 +k_1)$$
$$\equiv (j_1 - i_1 + k_2) - (i_1 - j_2 + k_1) \equiv 
L(i_1,j_1,k_2) - L(i_1,j_2,k_1) \equiv 0 \mod t.$$

Since $(i_1,j_2,k_2)$ and $(i_1, j_1,k_1)$ are in two opposite quadrants, they contain the same symbol,
so $L(i_1,j_2,k_2) = L(i_1,j_1,k_1)$ or, in other words,
$\{(i_1, j_1,k_1), (i_1,j_2,k_1), (i_1,j_1,k_2), (i_1,j_2,k_2\}$ is
a $4$-cycle and, more generally, two cells from the same row 
in the row layer $i_1$, but from two adjacent quadrants, 
uniquely determine a $4$-cycle. 
Similarly, we can also prove that two cells in the same file of 
row layer $i_1$, but from two adjacent quadrants,
uniquely determine a $4$-cycle.

If the intersection of two $4$-cycles contains $2$ (or $3$ cells), then
two cells of one of the $4$-cycles have distinct column
and file coordinates.
Assume that these cells are $(i_1,j_1,k_1)$ and $(i_1,j_2,k_2)$ ($j_1 \neq j_2$, $k_1 \neq k_2$). But then the intersection of the two
$4$-cycles must be the $4$ cells $(i_1,j_1,k_1), $ $ (i_1,j_2,k_1), $ $ (i_1,j_1,k_2), $ $ (i_1,j_2,k_2)$, a contradiction.
We conclude that the intersection of two $4$-cycles in row layer $i_1$ is either empty, or it contains $1$ or $4$ cells.
Furthermore, since the sets of symbols in two adjacent quadrants are different and $|R_{i_1,k_1} \cap L_2|=t$,
any cell $(i_1, j_1,k_1)$ in the row layer $i_1$ in the 
starting Latin cube is in $t$ $4$-cycles, and
each of these $4$-cycles contains two symbols $s_1 \in \mathcal{S}_1$ and $s_2 \in \mathcal{S}_2$.

Permuting the rows, the files or the symbols does not affect the number of $4$-cycles in row layer $i_1$ that a cell is part of.
It follows that every row layer of the starting Latin cube has the 
properties stated in Properties
 \ref{prop:cycles}, \ref{pro4cyc} and \ref{two4cycle}. 
That the same holds for every
column layer of the starting Latin cube can be proved similarly.

 %We will prove that every row layer of the starting Latin cube has 
 %\ref{prop:cycles}, \ref{pro4cyc} and \ref{two4cycle} and similar proof can be done for every column layer of  the starting Latin cube.
\end{proof}

\begin{definition}
\label{defsubcube}
A {\em subcube of order $2$} (or just {\em subcube}) 
in a Latin cube $L$ is a set of eight cells 
$$\{(i_1,j_1,k_1), (i_1,j_2,k_1), (i_2,j_1,k_1), (i_2,j_2,k_1), 
(i_1,j_1,k_2), (i_1,j_2,k_2), (i_2,j_1,k_2), (i_2,j_2,k_2)\}$$
such that $$L(i_1,j_1,k_1)=L(i_2,j_2,k_1)=L(i_1,j_2,k_2)=L(i_2,j_1,k_2)$$
and $$L(i_1,j_2,k_1)= L(i_2,j_1,k_1)=L(i_1,j_1,k_2)= L(i_2,j_2,k_2).$$
\end{definition}

\begin{property}
\label{tsubcube}
Each cell in the starting Latin cube of order $n=2t$ belongs to $t$ subcubes;
each of these subcubes contains two distinct 
symbols $s_1 \in \mathcal{S}_1$ and $s_2 \in \mathcal{S}_2$.
\end{property}
\begin{proof}
Consider an arbitrary cell $(i_1,j_1,k_1)$ of the starting Latin cube  $L$ which belongs to 
a $4$-cycle $\mathfrak{c}_1=\{(i_1,j_1,k_1), $ $ (i_1,j_2,k_1), $ $ (i_2,j_1,k_1), $ $ (i_2,j_2,k_1)\}$ such that
$L(i_1,j_1,k_1)=L(i_2,j_2,k_1)$ and $L(i_1,j_2,k_1)= L(i_2,j_1,k_1)$. By Property \ref{procubes}, 
there are $t$ $4$-cycles $\mathfrak{c}_1$ in the file layer $k_1$ containing $(i_1,j_1,k_1)$, 
each of these $4$-cycles contains two symbols $s_1 \in \mathcal{S}_1$ and $s_2 \in \mathcal{S}_2$.

Let us consider the case $L(i_1,j_1,k_1) \equiv i_1 -j_1 +k_1 \mod t$
(the case $L(i_1,j_1,k_1) \equiv j_1 -i_1 +k_1 \mod t$ can be done similarly).
Since the two cells $(i_1,j_1,k_1)$ and $(i_1,j_2,k_1)$ are in the same 
row $R_{i_1,k_1}$, 
by Definition \ref{defSLatincube}, 
$L(i_1,j_2,k_1) \equiv i_1 -j_2 +k_1 \mod t$.
By Property \ref{cellproperty} and Property \ref{procubes}, we have 
that $L(i_2,j_2,k_1) \equiv j_2 -i_2+k_1 \mod t$,
$L(i_2,j_1,k_1) \equiv j_1 -i_2+k_1 \mod t$. 
Moreover, the two cells $(i_1,j_1,k_1)$ and $(i_2,j_1,k_1)$ define a unique $4$-cycle
$$\mathfrak{c}_2=\{(i_1,j_1,k_1), (i_2,j_1,k_1), (i_1,j_1,k_2), (i_2,j_1,k_2)\}$$ 
in the column layer $j_1$
such that $L(i_1,j_1,k_1)=L(i_2,j_1,k_2)$ and $L(i_2,j_1,k_1)= L(i_1,j_1,k_2)$. 
Since $(i_2,j_1,k_2)$ and $(i_1,j_1,k_1)$ are in two opposite quadrants in the column layer $j_1$,
Property \ref{cellproperty} implies that $L(i_2,j_1,k_2) \equiv i_2 -j_1+k_2 \mod t$.
It is easy to check that $L(i_1,j_2,k_2) \equiv j_2-i_1+k_2 \mod t$,
$L(i_1,j_1,k_2) \equiv j_1 -i_1+k_2 \mod t$, $L(i_2,j_2,k_2) \equiv i_2-j_2+k_2 \mod t$.
Hence
$$i_1 - j_1 +k_1  \equiv j_2 -i_2 +k_1 \equiv i_2 - j_1 +k_2 \mod t$$ 
and 
$$i_1 - j_2 + k_1 \equiv j_1 - i_2 +k_1 \equiv j_1 - i_1 + k_2 \mod t$$
Thus, we have
$$j_1 - i_1 + k_2  \equiv i_2 -j_2 +k_2 \equiv  j_1 - i_2 +k_1 \mod t$$
and 
$$j_2 -i_1+ k_2  \equiv i_2 - j_1 +k_2 \equiv  i_1 - j_1 +k_1  \mod t.$$

Since the set of symbols in two opposite quadrants
are the same, we obtain the following
$$L(i_1,j_2,k_1)= L(i_2,j_1,k_1)=L(i_1,j_1,k_2)= L(i_2,j_2,k_2)$$
and $$L(i_1,j_1,k_1)=L(i_2,j_2,k_1)=L(i_1,j_2,k_2)=L(i_2,j_1,k_2).$$ 
This implies that
$$\{(i_1,j_1,k_1), (i_1,j_2,k_1), (i_2,j_1,k_1),  (i_2,j_2,k_1), (i_1,j_1,k_2), 
(i_1,j_2,k_2), (i_2,j_1,k_2),  (i_2,j_2,k_2)\}$$ is a subcube;
and so, each cell in the starting Latin cube belongs to 
$t$ subcubes; each of these subcubes contains 
two distinct symbols $s_1 \in \mathcal{S}_1$ and $s_2 \in \mathcal{S}_2$.
\end{proof}
Note that some cells in the starting Latin cube may be contained in a 
subcube with symbols only in one of the sets $\mathcal{S}_1$ (or  $\mathcal{S}_2$).
Since the number of such subcubes is very small, from now on,
when referring to a subcube we mean a subcube containing two symbols
$s_1 \in \mathcal{S}_1$ and $s_2 \in \mathcal{S}_2$.
The following property is straightforward
from the proof of Property \ref{tsubcube}.

\begin{property}
\label{subcubecells}
Let  $(i_1,j_1,k_1)$ and $(i_2,j_2,k_2)$ 
$(i_1 \neq i_2, j_1 \neq j_2, k_1 \neq k_2)$ 
be two cells in a subcube in the starting Latin cube of order $n$, 
then the octant containing $(i_1,j_1,k_1)$ is opposite to the octant containing $(i_2,j_2,k_2)$.
Furthermore, if $L(i_1,j_1,k_1) \equiv i_1 - j_1 + k_1 \mod t$, then $L(i_2,j_2,k_2) \equiv i_2 - j_2 + k_2 \mod t$;
if $L(i_1,j_1,k_1) \equiv j_1 - i_1 + k_1 \mod t$, then $L(i_2,j_2,k_2) \equiv j_2 - i_2 + k_2 \mod t$.
\end{property}

\begin{property}
\label{cellsinoppositequadrants}
Let $Q_1$ and $Q_2$ be two opposite quandrants in the row layer $i$
in the starting Latin cube $L$ of order $n$. Moreover, let $(i,j_1,k_1) \in Q_1$ and
$(i,j_2,k_2) \in Q_2$. If $L(i,j_1,k_1)=L(i,j_2,k_2)$,
then there is a subcube containing $(i,j_1,k_1)$ and $(i,j_2,k_2)$.
A similar property holds for column layers and file layers.
\end{property}
\begin{proof}
This property is obvious from the fact that there are exactly $t$ subcubes
containing $(i,j_1,k_1)$;
each of these subcubes must contain a cell $(i,j,k)$ in $Q_2$ such that
$L(i,j,k)=L(i,j_1,k_1)$, and in $Q_2$ 
there are exactly $t$ cells with the symbol $L(i,j_1,k_1)$.
\end{proof}

\begin{property}
The intersection of two subcubes in a Latin cube
is either empty, or it contains $1$ or $8$ cells.
\end{property}
\begin{proof}
Assume that the intersection of two given subcubes contains at least $2$ cells. 
If these $2$ cells lie in a $4$-cycle of a layer of the Latin cube,
then by  Property \ref{two4cycle}, 
this $4$-cycle belongs to the  intersection of two subcubes.
But each $4$-cycle defines a unique subcube, which implies that 
the intersection of the two subcubes
contains $8$ cells. If these $2$ cells do not lie in a $4$-cycle, then
they must have distinct row, column
and file coordinates, so if we denote these two cells by
$(i_1,j_1,k_1)$ and $(i_2,j_2,k_2)$, respectively, then
$i_1 \neq i_2$, $j_1 \neq j_2$, $k_1 \neq k_2$. Hence, the intersection of 
the two subcubes must be the
$8$ cells $(i_1,j_1,k_1), $ $ (i_1,j_2,k_1), $ $ (i_2,j_1,k_1), $ $ (i_2,j_2,k_1),$ $ (i_1,j_1,k_2), $ 
$(i_1,j_2,k_2), $ $ (i_2,j_1,k_2), $ $  (i_2,j_2,k_2)$.
\end{proof}

\begin{definition}
Given a subcube $$\mathcal{C}=\{(i_1,j_1,k_1), (i_1,j_2,k_1), 
(i_2,j_1,k_1), (i_2,j_2,k_1), (i_1,j_1,k_2), (i_1,j_2,k_2), 
(i_2,j_1,k_2),  (i_2,j_2,k_2)\}$$ in a Latin cube $L$, 
a \textit{swap on $\mathcal{C}$} (or simply a \textit{swap})
denotes the transformation $L \rightarrow L'$ which retains the content
of all cells of $L$ except that if 
$$L(i_1,j_1,k_1)=L(i_2,j_2,k_1)=L(i_1,j_2,k_2)=L(i_2,j_1,k_2)=x_1$$
and $$L(i_1,j_2,k_1)= L(i_2,j_1,k_1)=L(i_1,j_1,k_2)= L(i_2,j_2,k_2)=x_2$$ then
$$L'(i_1,j_1,k_1)=L'(i_2,j_2,k_1)=L'(i_1,j_2,k_2)=L'(i_2,j_1,k_2)=x_2$$
and $$L'(i_1,j_2,k_1)= L'(i_2,j_1,k_1)=L'(i_1,j_1,k_2)= L'(i_2,j_2,k_2)=x_1.$$
\end{definition}

\begin{definition}
\label{halftransversalset}
A \textit{half transversal-set} $T_h$ determined by a cell $(i_1,j_1,k_1)$ in the starting 
Latin cube of order $n$ is a set of $t$ cells $(i_2,j_2,k_2)$ such that
$$\mathcal{C}=\{(i_1,j_1,k_1),  (i_1,j_2,k_1), (i_2,j_1,k_1), 
(i_2,j_2,k_1), (i_1,j_1,k_2), (i_1,j_2,k_2), (i_2,j_1,k_2), (i_2,j_2,k_2)\}$$
is a subcube. 
\end{definition}

It is straightforward that $(i_1,j_1,k_1)$ is in the half transversal-set determined by
$(i_2,j_2,k_2)$.

\begin{property}
\label{symbolsetofhalftransversalset}
The set of symbols appearing in the cells in a half transversal-set in the starting 
Latin cube of order $n=2t$ is $\mathcal{S}_1=\{1,2,\dots,t\}$ 
or $\mathcal{S}_2=\{t+1, \dots,n\}$.
\end{property}
\begin{proof}
Let $T_h$ be the half transversal-set determined by a cell $(i_1,j_1,k_1)$ in the starting
Latin cube of order $n$. Consider an arbitrary cell $(i_2,j_2,k_2) \in T_h$.
By Definition \ref{halftransversalset}, we have 
that $L(i_2,j_2,k_2)=L(i_1,j_2,k_1)$.
Since the row $R_{i_1,k_1}$ contains $n$ different symbols, there are 
no two cells  in $T_h$ that contain the
same symbol. Moreover, by Property \ref{tsubcube}, 
if $L(i_1,j_1,k_1) \in \mathcal{S}_1$ then 
$L(i_2,j_2,k_2)=L(i_1,j_2,k_1) \in \mathcal{S}_2$, which implies 
that the set of symbols appearing in the cells in
the half transversal-set $T_h$ is $\mathcal{S}_2$. Similarly, 
if $L(i_1,j_1,k_1) \in \mathcal{S}_2$ then 
the set of symbols appearing in the cells in the half transversal-set $T_h$ 
is $\mathcal{S}_1$.
\end{proof}

\begin{property}
\label{twocellsinhalftransversalset}
Let $(i,j,k)$ be any cell in the half transversal-set 
determined by $(i_1,j_1,k_1)$ in the starting Latin cube of order $n$.
There is a subcube containing $(i,j,k)$ and
$(i_2,j_2,k_2)$, $i \neq i_2, j \neq j_2, k\neq k_2$,
\emph{(}which also means that $(i_1,j_1,k_1)$ and $(i_2,j_2,k_2)$ are in the half transversal-set 
determined by $(i,j,k)$\emph{)}  if and only if $(i_1,j_1,k_1)$ and $(i_2,j_2,k_2)$ are in the same octant and 
\begin{itemize}
\item $i_1-i_2 \equiv j_1-j_2 \equiv k_2 - k_1 \mod t$  \emph{if} $L(i_1,j_1,k_1) \equiv i_1 - j_1 + k_1 \mod t$; \emph{or}
\item  $i_1-i_2 \equiv j_1-j_2 \equiv k_1 - k_2 \mod t$ \emph{if} $L(i_1,j_1,k_1) \equiv j_1 - i_1 + k_1 \mod t$.
\end{itemize}
\end{property}
\begin{proof}
$\Rightarrow$ Assume that  $(i,j,k)$ and $(i_2,j_2,k_2)$ $(i \neq i_2, j \neq j_2, k\neq k_2)$ are two cells of a subcube.
Property \ref{subcubecells} implies that both $(i_1,j_1,k_1)$ and $(i_2,j_2,k_2)$ belong to the octant that is opposite
to the octant containing $(i,j,k)$. Moreover, Definition \ref{defsubcube} 
implies that
$$L(i,j,k)=L(i,j_1,k_1)=L(i,j_2,k_2)=L(i_1,j,k_1)=L(i_2,j,k_2)=L(i_1,j_1,k)=L(i_2,j_2,k).$$
We now consider two different cases.

\begin{itemize}
\item If $L(i_1,j_1,k_1) \equiv i_1 - j_1 + k_1 \mod t$, then $L(i,j,k) \equiv i - j + k \mod t$ by Property \ref{subcubecells}.
Now, according to Property \ref{cellproperty}, we have that
$$i-j+k \equiv j_1 - i + k_1 \equiv j_2 - i + k_2 \equiv i_1 - j + k_1 \equiv 
i_2 - j + k_2 \equiv j_1 - i_1 + k \equiv j_2 - i_2 +k \mod t,$$ that is,
$i_1-i_2 \equiv j_1-j_2 \equiv k_2 - k_1 \mod t$.

\item If $L(i_1,j_1,k_1) \equiv j_1 - i_1 + k_1 \mod t$, then $L(i,j,k) \equiv j - i + k \mod t$ by Property \ref{subcubecells}.
Moreover, by Property \ref{cellproperty}, we have that
$$j-i+k \equiv i - j_1 + k_1 \equiv i - j_2 + k_2 \equiv j - i_1 + k_1 \equiv 
j- i_2+ k_2 \equiv i_1 - j_1 + k \equiv i_2 - j_2 +k \mod t,$$ that is,
$i_1-i_2 \equiv j_1-j_2 \equiv k_1 - k_2 \mod t$.

\end{itemize}

$\Leftarrow$ Now assume that $(i_1,j_1,k_1)$ and $(i_2,j_2,k_2)$ are in the same octant, $L(i_1,j_1,k_1) \equiv i_1 - j_1 + k_1 \mod t$,
$i_1-i_2 \equiv j_1-j_2 \equiv k_2 - k_1 \mod t$. This implies 
that $L(i_2,j_2,k_2) \equiv i_2 - j_2 +k_2 \mod t$.
We also have that $i \neq i_2, j \neq j_2, k\neq k_2$,
since $(i_2,j_2,k_2)$ and $(i,j,k)$ are in two opposite octants.
Furthermore, since $(i,j,k)$ and $(i_1,j_1,k_1)$ are two cells of a subcube, 
it follows from 
Definition \ref{defsubcube}, Property \ref{cellproperty} and Property \ref{procubes}
that $L(i_1,j_1,k_1)=L(i_1,j,k)$ or $i_1 - j_1 + k_1 \equiv j - i_1 + k 
\mod t$. 
Thus $$j + k - i_2 \equiv i_1 - j_1 + k_1 + i _1 - i_2 \equiv (i_1 - j_1) + (k_1 + i _1) - i_2 
\equiv (i_2 - j_2) + (k_2 + i_2) - i_2 \equiv i_2 - j_2 + k_2 \mod t.$$
Since  $(i',j_1,k_1)$ and $(i',j_2,k_2)$ are in the same quadrant for every row layer $i'$
and $(i',j_1,k_1)$ and $(i',j,k)$ are in two opposite quadrants for every row layer $i'$,
$(i_2,j_2,k_2)$ and $(i_2,j,k)$ are in two opposite quadrants in the row layer $i_2$.
Thus by Property \ref{cellproperty}, 
$$L(i_2,j,k) \equiv j - i_2 + k \equiv L(i_2,j_2,k_2) \mod t.$$ Hence 
$L(i_2,j,k) =L(i_2,j_2,k_2)$ since two opposite quadrants contain the same set of symbols. 
Similarly, one can prove that 
$$L(i,j_2,k)=L(i,j,k_2)=L(i_2,j_2,k_2), \text{ and }$$ 
$$L(i,j_2,k_2)=L(i_2,j,k_2)=L(i_2,j_2,k)=L(i,j,k),$$ which implies that $(i_2,j_2,k_2)$ and $(i,j,k)$ are two cells of a subcube, and so,
$(i_2,j_2,k_2)$ is in the half transversal-set determined by $(i,j,k)$.

The proof in the case when $(i_1,j_1,k_1)$ and $(i_2,j_2,k_2)$ are in the same octant,
$L(i_1,j_1,k_1) \equiv j_1 - i_1 + k_1 \mod t$ and $i_1-i_2 \equiv j_1-j_2 \equiv k_1 - k_2 \mod t$
is similar.
\end{proof}

\begin{property}
\label{twohalftransversalset}
Two different
half transversal-sets in the starting Latin cube of order $n$ are disjoint.
\end{property}
\begin{proof}
Let $T_{h_1}$ be the half transversal-set determined by $(i_1,j_1,k_1)$
and $T_{h_2}$ be the half transversal-set determined by $(i_2,j_2,k_2)$.
Assume further that the two distinct cells $(i_1,j_1,k_1)$ and $(i_2,j_2,k_2)$ 
do not determine the same half transversal-set
(i.e. $T_{h_1} \neq T_{h_2}$) and  $T_{h_1} \cap T_{h_2} \neq \emptyset$.
Pick a cell $(i,j,k) \in T_{h_1} \cap T_{h_2}$, and let us consider the case 
$L(i,j,k) \equiv i -j +k \mod t$
(the case $L(i,j,k) \equiv j -i +k \mod t$ can be done similarly).
By Property \ref{subcubecells}, $L(i_1,j_1,k_1) \equiv i_1 - j_1 + k_1 \mod t$. 
It is obvious that $(i_1,j_1,k_1)$ and $(i_2,j_2,k_2)$ are in the 
half transversal-set determined by $(i,j,k)$, and
according to Property \ref{twocellsinhalftransversalset}, 
we then have that $(i_1,j_1,k_1)$ and $(i_2,j_2,k_2)$ 
are in the same octant and $i_1-i_2 \equiv j_1-j_2 \equiv k_2 - k_1 \mod t$. 

Pick any cell $(i_3,j_3,k_3)$ in the half transversal-set $T_{h_1}$. Again,
it follows from
Property \ref{twocellsinhalftransversalset} that there is a subcube containing
$(i_3,j_3,k_3)$ and $(i_2,j_2,k_2)$. Thus, $(i_3,j_3,k_3) \in T_{h_2}$.
Therefore, $T_{h_1} = T_{h_2}$, a contradiction. We conclude that
two different half transversal-sets are disjoint.
\end{proof}

\begin{property}
\label{relatedhalftrans}
For any half transversal-set $T_{h_1}$ in the starting Latin cube of order $n$,
there is a unique half transversal-set $T_{h_2}$ 
such that:
\begin{itemize}

 \item for any cell $(i,j,k) \in T_{h_1}$, the half transversal-set determined 
by $(i,j,k)$ is $T_{h_2}$, and

\item for any cell $(i,j,k) \in T_{h_2}$, the half transversal-set determined 
by $(i,j,k)$ is $T_{h_1}$. 

\end{itemize}
Furthermore, $T_{h_1} \cup T_{h_2}$ contains $n$ different symbols. 
\end{property}

Two half transversal-sets $T_{h_1}$ and  $T_{h_2}$ as in the preceding
Property are called {\em related}.

\begin{proof}
Let $(i_1,j_1,k_1)$ be a cell that determines $T_{h_1}$. Consider a cell 
$(i_2,j_2,k_2) \in T_{h_1}$.
There is a unique half transversal-set $T_{h_2}$ 
determined by $(i_2,j_2,k_2)$, and
$(i_1,j_1,k_1) \in T_{h_2}$. 

\begin{itemize}
\item For any cell $(i,j,k) \in T_{h_1}$, we have that $(i_1,j_1,k_1)$ is in 
the half transversal-set  $T_h$ determined by $(i,j,k)$. 
Thus $(i_1,j_1,k_1) \in T_{h} \cap T_{h_2}$;
by Property \ref{twohalftransversalset}, this means that $T_h =T_{h_2}$.

\item For any cell $(i,j,k) \in T_{h_2}$, we have that $(i_2,j_2,k_2)$ is in 
the half transversal-set  $T_h$ determined by $(i,j,k)$. 
Thus $(i_2,j_2,k_2) \in T_{h} \cap T_{h_1}$, and
by Property \ref{twohalftransversalset}, this means that $T_h =T_{h_1}$.
 \end{itemize}
 
It is obvious that the set of symbols in $T_{h_1}$ and   the set of symbols 
in $T_{h_2}$ are disjoint.
Therefore $T_{h_1} \cup T_{h_2}$ contains $n$ different symbols. 
We conclude that for 
any half  transversal-set 
$T_{h_1}$,  there exists a unique half transversal-set $T_{h_2}$  such that 
$T_{h_1}$ and  $T_{h_2}$ are related. 
\end{proof}

\begin{definition}
\label{deftransversalset}
A \textit{transversal-set} of the starting Latin cube of order $n$ is a union of two related half transversal-sets.
\end{definition}

From the properties of half transversal-sets, we can immediately deduce the
Property \ref{protransversalset} for transversal-sets.
\begin{property}
\label{protransversalset}
Let  $\mathfrak{t}$ be a transversal-set in the starting Latin cube of order $n$, then
\begin{itemize}
\item $\mathfrak{t}$ is a set of $n$ cells and no two cells in $\mathfrak{t}$ contain the same symbol;
\item no two cells in $\mathfrak{t}$ are in the same row/column/file;
\item for any cell $(i_1,j_1,k_1)$ in $\mathfrak{t}$, $\mathfrak{t}$ contains all cells $(i_2,j_2,k_2)$
$(i_1 \neq i_2, j_1 \neq j_2, k_1 \neq k_2)$ such that $(i_1,j_1,k_1)$ and $(i_2,j_2,k_2)$ are two cells of a subcube.
\end{itemize}
Furthermore, any two distinct transversal-sets are disjoint.
\end{property}

\begin{property}
\label{halftranssymbol}
Let $(i,j,k)$ and $(i,j_x,k_x)$ be two cells in the same octant
of the starting Latin cube of order $n$ satisfying that $L(i,j,k)=L(i,j_x,k_x)$;
let $T_{h_1}$ and $T_{h_2}$ be two half transversal-sets containing 
$(i,j,k)$ and $(i,j_x,k_x)$, respectively.
If $(i',j',k')$ is a cell in $T_{h_1}$, then $T_{h_2}$ contains a cell $(i',j'_x,k'_x)$ such that $L(i',j',k')=L(i',j'_x,k'_x)$.
\end{property}
\begin{proof}
Assume that $L(i,j,k) \equiv i - j + k \mod t$ (the case $L(i,j,k) \equiv j -i +k \mod t$ is similar). 
By Property \ref{twocellsinhalftransversalset}, $(i,j,k)$ and $(i',j',k')$ are contained in the same octant $O_1$, 
$L(i',j',k') \equiv i' -j' + k' \mod t$ and $i - i' \equiv j - j' \equiv k' - k \mod t$. 
Since $(i,j_x,k_x)$ and $(i,j,k)$ are in the same octant, $(i,j_x,k_x) \in O_1$. Thus
$L(i, j_x,k_x) \equiv i - j_x + k_x \mod t$ and $i - j+ k \equiv i - j_x + k_x \mod t$, 
i.e. $k - j \equiv k_x - j_x \mod t$.
Now, there is only one cell $(i',j'_x,k'_x) \in O_1$ such that $j'_x \equiv j_x + i' - i \mod t$ and $k'_x \equiv k_x + i - i' \mod t$. 
Note that this choice of $j'_x$ and $k'_x$ implies that
 $i-i' \equiv j_x - j'_x \equiv k'_x - k_x \mod t$, which means that 
$(i',j'_x,k'_x) \in T_{h_2}$ by Property \ref{twocellsinhalftransversalset}.

We also have that $j_x - j'_x \equiv k'_x - k_x  \equiv i - i' \equiv j - j' \equiv k' - k \mod t$. 
Hence, $$L(i',j'_x,k'_x) \equiv i' - j'_x + k'_x \equiv i' - (j_x  + j' - j) + (k_x + k' - k)$$
$$\equiv i' - j' + k' + (k_x - j_x - k + j) 
\equiv i' - j' + k' \equiv L(i',j',k')  \mod t.$$ 
Since $(i',j',k')$ and $(i',j'_x,k'_x)$ are in the same octant $O_1$, this means
that $L(i',j',k')=L(i',j'_x,k'_x)$.
\end{proof}

\begin{property}
\label{rowblocknew}
Consider an arbitrary row $\{(i_1,j_1,k_1),\dots, (i_1,j_n,k_1)\}$
of the starting Latin cube $L$ of order $n$. 
For any $k_2$ $(i_2)$, there exists a unique $i_2$ $(k_2)$,
%and for any $j_2$, there exists a unique $k_2$), 
such that $L(i_1,x,k_1)=L(i_2,x,k_2)$ for every $x \in \{1,\dots,n\}$. 
\end{property}
\begin{proof}
We consider several cases:
\begin{itemize}

\item If $i_1, k_1 \leq t$, then for any $k_2 \leq t$, we can by Definition 
\ref{defSLatincube} 
choose a unique $i_2 \leq t$ such that $i_2  \equiv i_1 - k_1 + k_2\mod t$ to get
$$L(i_1,x,k_1)=x - i_1 + k_1 \mod t=x - i_2 + k_2 \mod t=L(i_2,x,k_2)$$ for every 
$x \in [t]$, 
and $$L(i_1,x,k_1)=(x - i_1 + k_1 \mod t) + t=(x - i_2 + k_2 \mod t) + t=L(i_2,x,k_2)$$ 
for every $x \in \{t+1,\dots,n\}$. 
Similarly, for any $i_2 \leq t$, we can choose a unique $k_2 \leq t$ such that $k_2  \equiv k_1 - i_1 +i_2 \mod t$. 

For any $k_2 >t$ ($i_2>t$), we can choose a unique $i_2 > t$ ($k_2>t$) such that $k_2 - i_2  \equiv k_1 - i_1 \mod t$. 

\item If $i_1, k_1>t$, then we can proceed exactly as in  the first case. That is, 
for any $k_2 \leq t$ ($i_2 \leq t$), we can choose a unique $i_2 \leq t$ ($k_2 \leq t$) such that $k_2 - i_2  \equiv k_1 - i_1 \mod t$;
for any $k_2 >t$ ($i_2>t$), we can choose a unique $i_2 > t$ ($k_2>t$) such that $k_2 - i_2  \equiv k_1 - i_1 \mod t$. 

\item For the two remaining cases, $i_1> t$, $k_1 \leq t$ and $i_1 \leq t$, $k_1 >t$, 
for any $k_2 >t$ ($i_2>t$), we can choose a unique $i_2 \leq t$ ($k_2 \leq t$) such that $k_2 + i_2  \equiv k_1 + i_1 \mod t$;
for any $k_2 \leq t$ ($i_2 \leq t$), we can choose a unique $i_2 > t$ ($k_2>t$) such that $k_2 + i_2  \equiv k_1 + i_1 \mod t$. 
\end{itemize}
\end{proof}

\begin{property}
\label{fileblocknew}
Consider an arbitrary file $\{(i_1,j_1,k_1),\dots, (i_1,j_1,k_n)\}$
of the starting Latin cube $L$ of order $n$. 
For any $i_2$ $(j_2)$, there exists a unique $j_2$ $(i_2)$,
%and for any $j_2$, there exists a unique $k_2$), 
such that $L(i_1,j_1,x)=L(i_2,j_2,x)$ for every $x \in \{1,\dots,n\}$. 
\end{property}

\begin{proof}
We consider several cases:
\begin{itemize}
\item If $i_1, j_1 \leq t$, then for any $i_2 \leq t$, we can
by Definition \ref{defSLatincube} 
choose a unique $j_2 \leq t$ such that $j_2  \equiv j_1 - i_1 + i_2\mod t$ to get
$$L(i_1,j_1,x)=j_1 - i_1 + x \mod t=j_2 - i_2 + x \mod t= L(i_2,j_2,x)$$
for every $x \in [t]$, and
$$L(i_1,j_1,x)=(i_1 - j_1 + x \mod t) +t=(i_2 - j_2 + x \mod t) +t= L(i_2,j_2,x)$$
for every $x \in \{t+1,\dots,n\}$. 
Similarly, for any $j_2 \leq t$, we can choose a unique $i_2 \leq t$ 
such that $i_2  \equiv i_1 - j_1 +j_2 \mod t$. 

For any $i_2 >t$ ($j_2>t$), we can choose a unique $j_2 > t$ ($i_2>t$) such that $i_1+i_2  \equiv j_1 + j_2 \mod t$
to get $$L(i_1,j_1,x)=j_1 - i_1 + x \mod t=i_2 - j_2 + x \mod t= L(i_2,j_2,x)$$ for every $x \in [t]$,
and $$L(i_1,j_1,x)=(i_1 - j_1 + x \mod t) +t=(j_2 - i_2 + x \mod t) +t= L(i_2,j_2,x)$$
for every $x \in \{t+1,\dots,n\}$.

\item Similarly, if $i_1, j_1>t$, then 
for any $i_2 \leq t$ ($j_2 \leq t$), we can choose a unique $j_2 \leq t$ ($i_2 \leq t$) such that $i_1+i_2 \equiv j_1+j_2\mod t$;
for any $i_2 >t$ ($j_2>t$), we can choose a unique $j_2 > t$ ($i_2>t$) such that $i_2 - j_2  \equiv i_1 - j_1 \mod t$. 

\item If $i_1> t$, $j_1 \leq t$, then 
for any $i_2 > t$ ($j_2 \leq t$), we can choose a unique $j_2 \leq t$ ($i_2 > t$) such that $i_2 - j_2  \equiv i_1 - j_1 \mod t$;
for any $i_2 \leq t$ ($j_2 > t$), we can choose a unique $j_2 > t$ ($i_2 \leq t$) such that $i_2 + j_2  \equiv i_1 + j_1 \mod t$. 

\item If $i_1 \leq t$, $j_1 >t$, then
for any $i_2 \leq t$ ($j_2 > t$), we can choose a unique $j_2 > t$ ($i_2 \leq t$) such that $i_2 - j_2  \equiv i_1 - j_1 \mod t$;
for any $i_2 >t$ ($j_2 \leq t$), we can choose a unique $j_2 \leq t$ ($i_2 > t$) such that $i_2 + j_2  \equiv i_1 + j_1 \mod t$. 
\end{itemize}
\end{proof}

\begin{definition}
Let $L$ be the starting Latin cube of order $n$.
A \textit{row block} of $L$ is a set of $n$ rows $R_{i,k}$ such that for every pair 
of rows $R_{i_1,k_1}=\{(i_1,j,k_1): j \in [n]\}$ and 
$R_{i_2,k_2} = \{(i_2,j,k_2): j \in [n]\}$ in this set, $L(i_1,x,k_1)=L(i_2,x,k_2)$ for every $x \in \{1,\dots,n\}$. 
\end{definition}

It is obvious that there are $n$ row blocks in total, and that
two distinct row blocks are disjoint.
\textit{File blocks} are defined similarly.

\begin{property}
\label{cellbelongsrowblock}
%Let $L$ be a Latin cube which is isotopic to a Boolean Latin cube. 
If 
$$\mathcal{C}=\{(i_1,j_1,k_1),  (i_1,j_2,k_1), (i_2,j_1,k_1), 
(i_2,j_2,k_1), (i_1,j_1,k_2), (i_1,j_2,k_2), (i_2,j_1,k_2), (i_2,j_2,k_2)\}$$
is a subcube in the starting Latin cube $L$,
then the two rows $R_{i_1,k_1}$ and $R_{i_2,k_2}$ are in the same row block,
as are also the two rows $R_{i_2, k_1}$ and $R_{i_1, k_2}$. 
\end{property}
Note that a  similar property holds for file blocks. Furthermore, this also implies that
every row block and file block contains exactly $n$ disjoint transversal-sets.
Unfortunately, no analogue of Property \ref{rowblocknew} holds for columns;
that is why we need to consider the notion of a {\em half column}.

\begin{definition}
\label{defhalfcolumn}
A \emph{half column} in the starting Latin cube of order $n=2t$ is a set of $t$ cells in a column such that the symbols used in this set
are $\mathcal{S}_1=\{1,2,\dots,t\}$ or $\mathcal{S}_2=\{t+1,\dots,n\}$.
\end{definition}

\begin{property}
\label{halftranscolumn}
Let $(i,j,k)$ and $(i_x,j,k)$ be two cells in a half column $C'_{j,k}$ of 
the starting Latin cube of order $n$, and
let $T_{h_1}$ and $T_{h_2}$ be the two 
half transversal-sets containing $(i,j,k)$ and $(i_x,j,k)$, respectively.
If $(i',j',k') \in T_{h_1}$ is a cell of a half column $C'_{j',k'}$, then $T_{h_2}$ contains a cell $(i'_x, j',k') \in C'_{j',k'}$.
\end{property}
\begin{proof}
Assume that $L(i,j,k) \equiv i - j + k \mod t$ 
(the case $L(i,j,k) \equiv j -i +k \mod t$ is similar). 
By Property \ref{twocellsinhalftransversalset}, $(i,j,k)$ and $(i',j',k')$ are contained in the same octant $O_1$, 
$i - i' \equiv j - j' \equiv k' - k \mod t$ and $C'_{j',k'}$ is contained in $O_1$.
Since $(i_x,j,k)$ and $(i,j,k)$ are in the same half column, 
they are  both contained in the octant $O_1$,
and $L(i_x,j,k) \equiv i_x - j + k \mod t$. Now, 
there is only one cell $(i'_x,j',k') \in C'_{j',k'}$ 
such that $i'_x \equiv i_x + i' - i \mod t$, that is, 
$i_x-i'_x \equiv j - j' \equiv k' - k \mod t$. Since $(i'_x,j',k') \in O_1$, it follows
from Property \ref{twocellsinhalftransversalset} that
$T_{h_2}$ contains the cell  $(i'_x,j',k') \in C'_{j',k'}$.
\end{proof}

\begin{definition}
\label{defhalfcolumnblock}
A \emph{first half column block} determined by $(i_1,j_1,k_1)$ in the starting Latin cube of order $n$ 
is a set containing every half column %that contains 
with the property that it has a cell
$(i_1,j_2,k_2)$ satisfying that
$$\mathcal{C}=\{(i_1,j_1,k_1),  (i_1,j_2,k_1), (i_2,j_1,k_1), 
(i_2,j_2,k_1), (i_1,j_1,k_2), (i_1,j_2,k_2), (i_2,j_1,k_2), (i_2,j_2,k_2)\}$$
is a subcube.
A \emph{second half column block} determined by $(i_1,j_1,k_1)$ 
in the starting Latin cube of order $n$ is a set containing every 
half column %that contains 
with the property that it has a cell
$(i_2,j_2,k_2)$ satisfying that $\mathcal{C}$ is a subcube.
\end{definition}

\begin{property}
\label{prophalfcolumnblock}
Every first  \emph{(}second\emph{)} half column block of the starting Latin cube  
of order $n=2t$ contains exactly $t$ half columns. 
Moreover, two distinct first  \emph{(}second\emph{)} half column blocks are disjoint and there are $4n$ first  \emph{(}second\emph{)} 
different half column blocks in total in the starting Latin cube.
\end{property}
\begin{proof}
Given any cell $(i_1,j_1,k_1)$, let $X_1$ be the set of cells $(i_1,j_2,k_2)$ and
$X_2$ be the set of cells $(i_2,j_2,k_2)$ satisfying that
$$\mathcal{C}=\{(i_1,j_1,k_1),  (i_1,j_2,k_1), (i_2,j_1,k_1), 
(i_2,j_2,k_1), (i_1,j_1,k_2), (i_1,j_2,k_2), (i_2,j_1,k_2), (i_2,j_2,k_2)\}$$
is a subcube. Since $|X_1|=|X_2|=t$ and the cells in $X_1$ ($X_2$) are contained in 
$t$ different file layers, no half column contains two cells from $X_1$ ($X_2$).
Hence, every first (second) half column block contains exactly $t$ half columns.

Let $\mathcal{C}_{ b_1}$ be the second half column block determined by $(i'_1,j'_1,k'_1)$
and $\mathcal{C}_{b_2}$ be the second half column block determined by $(i'_2,j'_2,k'_2)$.
Assume that the two cells $(i'_1,j'_1,k'_1)$ and $(i'_2,j'_2,k'_2)$ do not determine the same half column block, that is
$\mathcal{C}_{b_1} \neq \mathcal{C}_{b_2}$, and $\mathcal{C}_{b_1} \cap \mathcal{C}_{b_2} \neq \emptyset$.
Let $T_{h_1}$ and $T_{h_2}$ be the half transversal-set determined by $(i'_1,j'_1,k'_1)$ and $(i'_2,j'_2,k'_2)$, respectively.
Pick a half column $C'_{j,k} \in \mathcal{C}_{b_1} \cap \mathcal{C}_{b_2}$;
then in $C'_{j,k}$ there are cells
$(i,j,k) \in T_{h_1}$ and $(i_x,j,k) \in T_{h_2}$. Now, if 
$C'_{j',k'}$ is any half column from $\mathcal{C}_{b_1}$,
then there is a cell $(i',j',k') \in T_{h_1}$. 
By Property \ref{halftranscolumn}, $T_{h_2}$ contains a cell  
$(i'_x,j',k') \in C'_{j',k'}$; so, in fact, $C'_{j',k'} \in \mathcal{C}_{b_2}$.  
This implies that
 $\mathcal{C}_{b_1} = \mathcal{C}_{b_2}$, a contradiction. Hence, two distinct
second half column blocks are disjoint. 

Note that  $(i_1,j_2,k_2)$ and $(i_2,j_2,k_2)$ are always in two half columns of the 
same column $C_{j_2,k_2}$,
so given a first half column block $\mathcal{C}_h$, the set of half columns which are in the same column as
the half columns in $\mathcal{C}_h$ but do not belong to $\mathcal{C}_h$ is a second half column block. 
Thus for every first half column block there is a unique corresponding 
second half column block; therefore
two first half column blocks are disjoint. 
Moreover, since the starting Latin cube of order $n$ contains $n^3$ cells, 
every cell is contained in exactly one first (second) 
half column block, and every half column block contains $n^2/4$ cells,
there are $4n$ first (second) half column blocks in total.
\end{proof}

\begin{definition}
\label{defhalftransversalblock}
A \emph{first half transversal block} determined by $(i_1,j_1,k_1)$ in the starting Latin cube of order $n$ 
is a set containing every half transversal-set
with the property that it has a cell
$(i_1,j_2,k_2)$ satisfying that
$$\mathcal{C}=\{(i_1,j_1,k_1),  (i_1,j_2,k_1), (i_2,j_1,k_1), 
(i_2,j_2,k_1), (i_1,j_1,k_2), (i_1,j_2,k_2), (i_2,j_1,k_2), (i_2,j_2,k_2)\}.$$
is a subcube.
A \emph{second half transversal block} determined by $(i_1,j_1,k_1)$ 
in the starting Latin cube of order $n$ 
is a set containing every half transversal-set
with the property that it has a cell
$(i_2,j_1,k_1)$ satisfying that $\mathcal{C}$ is a subcube.
\end{definition}

\begin{property}
\label{prophalftransblock}
Every first \emph{(}second\emph{)} half transversal block of the starting Latin cube of 
order $n=2t$ contains exactly $t$ half transversal-sets. 
Two distinct first \emph{(}second\emph{)} half 
transversal blocks are disjoint and there are 
$4n$ first \emph{(}second\emph{)} half transversal blocks in total.
\end{property}

\begin{proof}
Given any cell $(i_1,j_1,k_1)$, let $X_1$ be the set of cells $(i_1,j_2,k_2)$
satisfying that $$\mathcal{C}=\{(i_1,j_1,k_1),  (i_1,j_2,k_1), (i_2,j_1,k_1), 
(i_2,j_2,k_1), (i_1,j_1,k_2), (i_1,j_2,k_2), (i_2,j_1,k_2), (i_2,j_2,k_2)\}$$
is a subcube. Since $|X_1|=t$ and $L(i_1,j_2,k_2)=L(i_1,j_1,k_1)$,
no half transversal-set contains two cells from $X_1$. Since distinct
half transversal-sets are disjoint, this means that
every first half transversal block contains exactly $t$ half transversal-sets.
By Property \ref{relatedhalftrans}, each half transversal-set 
containing a cell $(i_1,j_2,k_2)$
is related to exactly one half transversal-set containing a cell $(i_2,j_1,k_1)$,
and vice versa. This 
implies that
every second half transversal block contains exactly $t$ half transversal-sets, and
if two first half transversal blocks are disjoint,
then the related two second half transversal blocks
are also disjoint.

We will now prove that two distinct first half transversal blocks are disjoint. 
From Definition \ref{defhalftransversalblock}, it is easy to check that every first transversal block is contained in one octant.
Let $\mathcal{T}_{b_1}$ be the first half transversal block determined by $(i'_1,j'_1,k'_1)$
and $\mathcal{T}_{b_2}$ be the first half transversal block determined by $(i'_2,j'_2,k'_2)$.
Assume that the
two cells $(i'_1,j'_1,k'_1)$ and $(i'_2,j'_2,k'_2)$ do not determine the same half transversal block, i.e. $\mathcal{T}_{b_1} \neq \mathcal{T}_{b_2}$, 
and $\mathcal{T}_{b_1} \cap \mathcal{T}_{b_2} \neq \emptyset$.
Then $\mathcal{T}_{b_1}$ and $\mathcal{T}_{b_2}$ are in the same octant $O$. 

Pick a half transversal-set $T_{h_1} \in \mathcal{T}_{b_1} \cap \mathcal{T}_{b_2}$.
Then $T_{h_1}$ contains a cell $(i'_1, j,k)$ in row layer $i'_1$ and a cell $(i'_2, j',k')$ in row layer $i'_2$ 
such that $L(i'_1, j,k)=L(i'_1,j'_1,k'_1)$ and $L(i'_2, j',k')=L(i'_2,j'_2,k'_2)$.
Now, for any half transversal-set $T_{h_2} \in \mathcal{T}_{b_1}$, 
there is a cell $(i'_1,j_x,k_x) \in T_{h_2}$
such that $$L(i'_1,j_x,k_x)=L(i'_1,j'_1,k'_1)=L(i'_1, j,k).$$ 
Since $(i'_1, j,k), (i'_1,j_x,k_x) \in O$, by Property \ref{halftranssymbol}, $T_{h_2}$ contains a cell 
$(i'_2,j'_x,k'_x) \in O$ such that $$L(i'_2,j'_x,k'_x)=L(i'_2, j',k')=L(i'_2,j'_2,k'_2).$$
Furthermore, in the row layer $i'_2$, $(i'_2,j'_x,k'_x)$ and $(i'_2,j'_2,k'_2)$ are in two opposite quandrants.
Thus by Property \ref{cellsinoppositequadrants}, there is a subcube containing 
$(i'_2,j'_x,k'_x)$ and $(i'_2,j'_2,k'_2)$.
Therefore $T_{h_2} \in \mathcal{T}_{b_2}$. 
This implies that $\mathcal{T}_{b_1} = \mathcal{T}_{b_2}$, a contradiction. 
Hence, two distinct first half transversal blocks are disjoint,
as are also distinct second half transversal blocks.

Furthermore, since the starting Latin cube of order $n$ contains $n^3$ cells, 
every cell is contained in exactly one first (second) 
half transversal block, and every half transversal block contains $n^2/4$ cells,
there are $4n$ first (second) half transversal blocks in total.
\end{proof}

\begin{property}
\label{blockdetermine}
\begin{itemize}

\item[(i)] The set of cells that determine the same first \emph{(}second\emph{)} 
half column block form a first half transversal block.
\item[(ii)] The set of cells that determine the same first \emph{(}second\emph{)} half transversal block form a first half column block.

\end{itemize}
\end{property}
\begin{proof}
Let $(i_1,j_1,k_1)$ and $(i_2,j_2,k_2)$ be any two cells that determine the first half column block $\mathcal{C}_b$,
then $L(i_1,j_1,k_1)$ and $L(i_2,j_2,k_2)$ belong to the same 
subset $S$ of symbols,
i.e., $S=S_1$ or $S=S_2$.
By Definition \ref{defhalfcolumnblock}, the half columns in $\mathcal{C}_b$
contains 
two sets $X_1$ and $X_2$ both consisting of $t$ cells, where $X_1$ contains
cells from row layer $i_1$ 
and $X_2$ contains cells from row layer $i_2$ , and where for any $(i_1,j'_1,k'_1) \in X_1$ we have $L(i_1,j'_1,k'_1)=L(i_1,j_1,k_1)$ 
and for any $(i_2,j'_2,k'_2) \in X_2$ we have $L(i_2,j'_2,k'_2)=L(i_2,j_2,k_2)$.
Let $(i_1,j,k)$ be any cell in $X_1$; then, by Definition \ref{defhalftransversalblock},
 $(i_1,j_1,k_1)$ 
is in the first half transversal block $\mathcal{T}_b$ determined by $(i_1,j,k)$. 

\begin{itemize}

\item If $L(i_1,j_1,k_1)=L(i_2,j_2,k_2)$, then $X_1=X_2$,
since the half columns in $C_b$ only contains $t$ cells
all of which contain the same symbol $L(i_1,j_1,k_1)$. Thus $(i_1,j,k) \in X_2$ and
$(i_2,j_2,k_2)$ are in the first half transversal block $\mathcal{T}_b$ determined by $(i_1,j,k)$. 

\item If $L(i_1,j_1,k_1) \neq L(i_2,j_2,k_2)$, then let $T_{h_1}$ be the half transversal-set containing $(i_1,j_1,k_1)$.
Then $T_{h_1} \in \mathcal{T}_{b}$ and the set of symbols in $T_{h_1}$ is $S$. Therefore, we can pick a cell
$(i_3,j_3,k_3) \in T_{h_1}$ such that $L(i_3,j_3,k_3)=L(i_2,j_2,k_2)$. If $(i_3,j_3,k_3)=(i_2,j_2,k_2)$, we are done.
Otherwise, let $T_{h_2}$ be the half transversal-set
related to $T_{h_1}$. 
By Property \ref{relatedhalftrans}, $T_{h_2}$ is 
determined by both $(i_3,j_3,k_3)$ and $(i_1,j_1,k_1)$.
It follows that these two cells determine the same second half column block, 
which means that they also determine the 
same first half column block. Now, both $(i_3,j_3,k_3)$ and $(i_2,j_2,k_2)$ determine the 
first half column block $\mathcal{C}_b$, and $L(i_3,j_3,k_3)=L(i_2,j_2,k_2)$, 
so as in the previous case, it follows that
$(i_2,j_2,k_2)$ and $(i_3,j_3,k_3)$ are in the same first transversal block, 
that is, $(i_2,j_2,k_2) \in \mathcal{T}_b$.
\end{itemize}

We conclude that the set of cells that determine the same first half column block 
are all contained in the same first half transversal block.
It follows immediately that the set of cells that determine the same 
second half column block are all contained in the same first half transversal block.
By proceeding similarly, we can prove that the set of cells that determine 
the same first (second) half transversal block are all contained in the same 
first half column block.
\end{proof}

\begin{definition}
A \textit{symbol-set} of the starting Latin cube $L$ of order $n$ in a row layer (or column layer or file layer)
 is a set of $n$ cells such that all these cells contain the same symbol. 
\end{definition}

\begin{definition}
A \textit{symbol block} of the starting Latin cube $L$ of order $n$ is a set of $n^2$ cells such that all these cells contain the same symbol. 
\end{definition}

\begin{property}
\label{Scolumnblock}
Let $L$ be the starting Latin cube of order $n$, $\mathfrak{b}$ an 
arbitrary symbol in $L$, and
$B_1$ be the set of cells of $L$ in the first 
column layer which contain symbol $\mathfrak{b}$.
For any column layer $j$, the set of cells $B_j$ of $L$ 
in column layer $j$ which have the 
same row and file coordinates as the cells in $B_1$ all contain the same symbol. 
\end{property}
\begin{proof}
Assume $(i_1,j,k_1) \in B_j$, 
$L(i_1,j,k_1)=x \equiv i_1 - j + k_1 \mod t$
(the case $L(i_1,j,k_1)\equiv j - i_1 + k_1 \mod t$ is similar).
Consider an arbitrary cell $(i_2,j,k_2) \in B_j$.
By definition, there are two cells $(i_1,1,k_1)$ and $(i_2,1,k_2)$ such that 
$L(i_1,1,k_1) = L(i_2,1,k_2) = \mathfrak{b}$. 
Since $L(i_1,j,k_1) \equiv i_1 - j + k_1 \mod t$, by Definition \ref{defSLatincube}
we have $L(i_1,1,k_1) \equiv i_1 - 1 + k_1 \mod t$. 
It follows from Property \ref{cellproperty}
that regardless of whether $(i_1,1,k_1)$ and $(i_2,1,k_2)$ are in the same quadrant 
or in two opposite quadrants, it holds that 
$L(i_2,1,k_2) \equiv i_2 - 1 + k_2 \mod t$.
This yields that $L(i_2,j,k_2) \equiv i_2 - j + k_2 \mod t$.
In consequence, $i_1 - 1 + k_1 \equiv  i_2 - 1 + k_2 \mod t$, which means that
 $i_1 - j + k_1 \equiv  i_2 - j + k_2 \mod t$, and so 
 $L(i_2,j,k_2) \equiv L(i_1,j,k_1) \mod t$. Since the set of symbols in a quadrant is $S_1$ or $S_2$ and the
 set of symbols in two opposite 
quadrants are the same, we deduce that  $L(i_2,j,k_2) = L(i_1,j,k_1)=x$.
 
We conclude that all cells in $B_j$ contain the same symbol.
\end{proof}

\begin{property}
\label{Sfileblock}
Let $L$ be the starting Latin cube of order $n$, 
$\mathfrak{b}$ an arbitrary symbol in $L$, and
$B_1$ be the set of cells of 
$L$ in the first file layer which contain $\mathfrak{b}$.
For any file layer $k$, the set of cells $B_k$ of 
$L$ in file layer $k$ which have the 
same row and column coordinates as the cells in $B_1$ all contain the same symbol. 
\end{property}

\begin{proof}
Assume $(i_1,j_1,k) \in B_k$, $L(i_1,j_1,k)=x \equiv i_1 - j_1 + k \mod t$
(the case $L(i_1,j,k_1)\equiv j_1 - i_1 + k \mod t$ is similar).
Consider an arbitrary cell $(i_2,j_2,k) \in B_k$.
By definition, there are two cells $(i_1,j_1,1)$ and $(i_2,j_2,1)$ such that 
$L(i_1,j_1,1) = L(i_2,j_2,1) = \mathfrak{b}$. We consider two cases.

\begin{itemize}
\item If $(i_1,j_1,1)$ and $(i_2,j_2,1)$ are in the same quadrant in the file layer $1$,
then $i_1 - j_1 + 1 \equiv i_2 - j_2 +1 \mod t$ or $j_1 - i_1 +1 \equiv j_2 - i_2 +1 \mod t$;
in both cases $i_1 - j_1 \equiv i_2 - j_2 \mod t$. 
Since $(i_1,j_1,k)$ and $(i_2,j_2,k)$ are 
in the same quadrant in the file layer $k$ and
 $L(i_1,j_1,k) \equiv i_1 - j_1 + k \mod t$, it holds that
$L(i_2,j_2,k) \equiv i_2 - j_2 + k \equiv i_1 - j_1 + k \mod t$, that is,
$L(i_2,j_2,k) \equiv L(i_1,j_1,k) \mod t$. 
Since the set of symbols in a quadrant is $S_1$ or $S_2$,
$L(i_2,j_2,k) = L(i_1,j_1,k) =x$.

\item If $(i_1,j_1,1)$ and $(i_2,j_2,1)$ are in two opposite quadrants 
in the file layer $1$,
then Property \ref{cellproperty} implies that $i_1 - j_1 + 1 \equiv j_2 - i_2 +1 \mod t$ or $j_1 - i_1 +1 \equiv i_2 - j_2 +1 \mod t$;
in both cases it holds that $i_1 + i_2 \equiv j_1 + j_2 \mod t$. 
Since $(i_1,j_1,k)$ and $(i_2,j_2,k)$ are 
in  two opposite quadrants in the file layer $k$, and
$L(i_1,j_1,k) \equiv i_1 - j_1 + k \mod t$, 
we have that $L(i_2,j_2,k) \equiv j_2 - i_2 + k \mod t$,
and since $j_2 - i_2 + k \equiv i_1 - j_1 + k \mod t$, 
$L(i_2,j_2,k) \equiv L(i_1,j_1,k) \mod t$. 
Since the set of symbols in two opposite quadrants are the same,
$L(i_2,j_2,k) = L(i_1,j_1,k) =x$. 
\end{itemize}
We conclude that all cells in $B_k$ contain the same symbol.
\end{proof}

Based on Property \ref{Scolumnblock}, we make the following definition.
\begin{definition}
A \textit{symbol-column block} of the 
starting Latin cube $L$ is a set $\mathfrak{s}$  of $n^2$ cells 
satisfying that
\begin{itemize}
	
	\item all cells of $\mathfrak{s}$ that are in the same column layer
	contain the same symbol, and
	
	\item for every cell of $\mathfrak{s}$, there are $n-1$ other cells that have
	the same row and file coordinate.

\end{itemize}
\textit{Symbol-file blocks} are defined similarly. 
\end{definition}
Unfortunately, no analogue of Property \ref{Scolumnblock} holds for row layers. 
Therefore we need to consider
{\em half symbol-sets} in row layers and {\em half symbol-row blocks}.

\begin{definition}
A \textit{half symbol-set} of the starting Latin cube $L$ of order $n=2t$
 is a set of $t$ cells in a quadrant of a row layer such that all these cells contain the same symbol. 
\end{definition}

\begin{definition}
A \emph{first half symbol-row block}  determined by $(i_1,j_1,k_1)$ in the starting Latin cube of order $n$ 
is a set containing every half symbol-set 
with the property that it has a cell
$(i_2,j_1,k_1)$ satisfying that
$$\mathcal{C}=\{(i_1,j_1,k_1),  (i_1,j_2,k_1), (i_2,j_1,k_1), 
(i_2,j_2,k_1), (i_1,j_1,k_2), (i_1,j_2,k_2), (i_2,j_1,k_2), (i_2,j_2,k_2)\}.$$
is a subcube.
A \emph{second half symbol-row block} determined by $(i_1,j_1,k_1)$  in the starting Latin cube of order $n$ 
is a set containing every half symbol-set 
with the property that it has a cell
$(i_2,j_2,k_2)$ satisfying that $\mathcal{C}$ is a subcube.
\end{definition}

\begin{property}
\label{propsymbolblock}
\begin{itemize}

\item[(i)] 
Every first \emph{(}second\emph{)} half symbol-row block of the starting Latin cube of 
order $n=2t$ contains exactly $t$ half symbol-sets. 

\item[(ii)]Two distinct first \emph{(}second\emph{)} half symbol-row blocks are disjoint, and there are $4n$ first \emph{(}second\emph{)} half symbol-row blocks.

\item[(iii)] The set of cells that determine the same
 first \emph{(}second\emph{)} half 
symbol-row block form a second half column block.
\end{itemize}
\end{property}

The proof of Property \ref{propsymbolblock} is similar to the proofs of 
Properties \ref{prophalfcolumnblock}, \ref{prophalftransblock} and \ref{blockdetermine};
the details are omitted.

The following simple observation enables us to permute layers and symbols
in a Latin cube.

\begin{property}
If $L$ is a Latin cube, then
the cube obtained by permuting the row layers, the column layers, 
the file layers and/or the symbols of $L$ is a Latin cube.
\end{property}

For starting Latin cubes an even stronger property holds.
If a Latin cube $L'$ is obtained from another Latin cube $L$ by permuting
row/column/file layers and/or symbols of $L$, then we say
that $L$ and $L'$ are {\em isotopic}.

Note that for a Latin cube $L'$ that is isotopic to the starting Latin cube $L$, 
we can define octants, quadrants, transversal-sets, row blocks, file blocks,
symbol-column blocks, symbol-file blocks, first (second) half column blocks, 
first (second) half transversal blocks, first (second) half symbol-row blocks of $L'$ similarly to the definitions for $L$.
Moreover, Properties \ref{tsubcube},  \ref{cellsinoppositequadrants}, 
\ref{symbolsetofhalftransversalset}, 
\ref{twohalftransversalset}, 
\ref{relatedhalftrans}, 
\ref{protransversalset}, 
\ref{halftranssymbol}, 
\ref{rowblocknew},  \ref{fileblocknew},  
\ref{cellbelongsrowblock}, 
\ref{halftranscolumn},  
\ref{prophalfcolumnblock}, 
\ref{prophalftransblock},  
\ref{blockdetermine},  
\ref{Scolumnblock},  \ref{Sfileblock},  \ref{propsymbolblock} hold for $L'$.
%%%%%

%%%%%

\bigskip

Given an $n\times n \times n$ cube $A$ where each cell contains a subset of the
symbols in $\{1,\dots,n\}$, and a Latin cube $L$ of order $n$
that does not avoid $A$, we say that those cells $(i,j,k)$ of $L$ where 
$L(i,j,k) \in A(i,j,k)$ are \textit{conflict cells of $L$ with $A$} 
(or simply {\em conflicts} of $L$).
An \textit{allowed subcube} of $L$ is a subcube 
$$\mathcal{C}=\{(i_1,j_1,k_1),  (i_1,j_2,k_1), (i_2,j_1,k_1), (i_2,j_2,k_1), 
(i_1,j_1,k_2),  
(i_1,j_2,k_2),  (i_2,j_1,k_2),   (i_2,j_2,k_2)\}$$ in $L$ such that 
swapping on $\mathcal{C}$ yields a Latin cube
$L'$ where none of $(i_1,j_1,k_1), $ $ (i_1,j_2,k_1), $ $ (i_2,j_1,k_1), $ $ (i_2,j_2,k_1),$ $ (i_1,j_1,k_2), $ 
$(i_1,j_2,k_2), $ $ (i_2,j_1,k_2), $ $  (i_2,j_2,k_2)$ is a conflict.

\section{Proof of the main theorem}

%%%%%

%%%%%

Now that all our preparatory lemmas and definitions are in place, we are
ready to prove Theorem \ref{maintheorem}.
Our basic proof strategy is similar to the one in 
\cite{JohanKlasLan}, in fact the only substantial difference
is that we employ the extended machinery
developed in Section 2 in place of the tools from \cite{JohanKlasLan}.

Our starting point in the proof is the starting Latin cube; we 
permute its row layers, column layers, file layers and symbols 
so that the resulting Latin cube does not have 
too many conflicts with a given $(m,m,m,m)$-cube $A$. After that, we find a set of allowed subcubes such that each conflict belongs to one of them, with no two of the subcubes intersecting, and swap on those subcubes.

The proof of Theorem \ref{maintheorem} involves a number of parameters:
$$\alpha, \gamma, \kappa, \epsilon, \theta,$$
and a number of inequalities that they must satisfy. For the reader's convenience,
explicit choices for which the proof holds are presented here:
\begin{equation}
\label{eq:param}
\alpha = 1/2-38 \times 2^{-27}, \gamma=2^{-27}, 
\kappa= 6 \times 2^{-27}, \epsilon = 2^{-6}, \theta=2^{-13}.
\end{equation}

By examples of unavoidable
$(\lfloor {\frac{n}{3}} \rfloor+1, 
\lfloor {\frac{n}{3}} \rfloor+1, \lfloor {\frac{n}{3}} \rfloor+1)$-arrays
in \cite{CulterOhman} (see also \cite{JohanKlasLan}), 
the value of $\gamma$ for which Theorem \ref{maintheorem}
holds cannot exceed $\frac{1}{3}$.
Thus, since the numerical value of $\gamma$ for which the theorem holds
is not anywhere near
what we expect to be optimal, we have not put an effort into choosing optimal values
for these parameters. 
Moreover, for simplicity of notation,
we shall omit floor and ceiling signs whenever these are not crucial.

We shall establish that our main theorem holds by
proving two lemmas.

\begin{lemma}
Let $\alpha, \gamma, \kappa$ be constants and $n=2t$ such that  

$$ \Big(7n^2 \dfrac{(\gamma n)^{\kappa n}}{(\kappa n)!} +3n^3 \dfrac{{(2\gamma n)^{(1/2-\alpha-2\gamma)n/3}}}{((1/2-\alpha-2\gamma)n/3)!}\Big) < 1.$$
For any $(\gamma n, \gamma n, \gamma n, \gamma n)$-cube $A$ of order $n$ there is a quadruple of permutations $\sigma=(\tau_1, \tau_2, \tau_3, \tau_4)$
of the row layers, the column layers, the file layers and the symbols of the starting Latin cube $L_0$ of order $n$,
respectively, 
such that applying $\sigma$ to $L_0$, we obtain a Latin cube $L$ satisfying
the following:
\begin{itemize}
\item[(a)] No row in $L$ contains more than $\kappa n$ conflicts with $A$.
\item[(b)] No column in $L$ contains more than $\kappa n$ conflicts with $A$.
\item[(c)] No file in $L$ contains more than $\kappa n$ conflicts with $A$.
\item[(d)] No symbol-set in $L$ contains more than $\kappa n$ conflicts with $A$.
\item[(e)] No transversal-set in $L$ contains more than $\kappa n$ conflicts with $A$.
\item[(f)] Each cell of $L$ belongs to at least $\alpha n$ allowed subcubes.
\end{itemize}
\end{lemma}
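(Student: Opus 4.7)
The plan is to apply the probabilistic method: sample $\tau_1,\tau_2,\tau_3,\tau_4$ independently and uniformly at random, let $L$ be the Latin cube obtained from $L_0$ by applying $\sigma=(\tau_1,\tau_2,\tau_3,\tau_4)$, and show that under the displayed inequality, with positive probability $L$ satisfies (a)--(f). The displayed bound is exactly what one obtains from a union bound over all bad events, so the whole proof reduces to identifying the correct families of events and upper-bounding the probability of each.

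For conditions (a)--(e) I would treat the five types of $n$-cell sets uniformly. Using Property~\ref{protransversalset} and the definitions in Section~2, the total count is $n^2$ rows, $n^2$ columns, $n^2$ files, $3n^2$ symbol-sets (one per symbol in each layer of each type), and $n^2$ transversal-sets, which gives $7n^2$. For any such set $\ell$, the $n$ symbols appearing in the corresponding cells of $L$ are a uniformly random permutation of $[n]$ once $\tau_4$ is averaged out (this is precisely why the structural machinery of Section~2 is needed: e.g.\ Properties \ref{protransversalset} and \ref{Sfileblock} ensure that symbol-sets and transversal-sets are indeed $n$-cell sets containing $n$ distinct symbols). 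Hence for any fixed $\kappa n$-subset $S$ of $\ell$, the probability that every cell of $S$ is a conflict is at most $\gamma^{\kappa n}$, since each cell has at most $\gamma n$ forbidden symbols and the images under a uniform random permutation are negatively associated. A union bound over the $\binom{n}{\kappa n}\le n^{\kappa n}/(\kappa n)!$ subsets yields
\[
\Pr\!\bigl[\ell \text{ contains } \ge \kappa n \text{ conflicts}\bigr] \le \frac{(\gamma n)^{\kappa n}}{(\kappa n)!},
\]
and multiplying by $7n^2$ matches the first term of the hypothesis.

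For condition (f) I would fix any cell $c$ and bound the probability that fewer than $\alpha n$ of the $n/2$ subcubes through $c$ (Property~\ref{tsubcube}) are allowed. A subcube is non-allowed iff swapping on it makes at least one of its eight cells a conflict; each such cell's post-swap symbol has conflict probability at most $\gamma$, so the expected number of non-allowed subcubes at $c$ is of order $\gamma n$. The ``unavoidable'' non-allowed subcubes coming from a conflict already present at $c$ or at its row/column/file partners contribute the $2\gamma n$ shift in the exponent. The remaining non-allowed subcubes are then classified by which of the three coordinate-pair types (row-file, column-file, row-column) witnesses the newly created conflict, which both introduces the $1/3$ in the exponent and the $3n^3$ prefactor; a union bound over a cell, a choice of direction, and a $(1/2-\alpha-2\gamma)n/3$-subset of non-allowed subcubes of that kind gives the second term.

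The main obstacle is condition (f). Unlike the line-type conditions, where the distribution along a single line is essentially governed by one uniform permutation of the symbols, the eight cells of a subcube share row, column and file layers, and so the events ``cell $x\in\mathcal{C}$ becomes a conflict after the swap'' are coupled through all four random permutations simultaneously. The structural lemmas of Section~2, in particular Property~\ref{subcubecells} (which fixes the two symbols that appear in $\mathcal{C}$ via opposite octants) together with Properties~\ref{halftranscolumn}, \ref{prophalfcolumnblock} and \ref{prophalftransblock}, are what allow one to decouple these events by coordinate-pair and, conditional on $\tau_1,\tau_2,\tau_3$, reduce them to configurations that $\tau_4$ controls independently. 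This decomposition parallels the argument of \cite{JohanKlasLan}, but genuinely requires the upgraded notions (half column blocks, half transversal blocks, half symbol-row blocks) developed in Section~2 to cover the order $2t$ case.
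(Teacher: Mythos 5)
Your overall framing---sample $(\tau_1,\tau_2,\tau_3,\tau_4)$ uniformly, union-bound over bad events, and verify that the displayed sum is exactly what falls out---is in essence the same as the paper's proof, which phrases the identical argument as a count of bad permutation quadruples divided by $(n!)^4$. For rows, columns, files and transversal-sets your mechanism is sound: those four families of $n$-cell sets really do each carry $n$ distinct symbols of $L_0$ (Property~\ref{protransversalset} for transversal-sets, the Latin property for the line types), so averaging over $\tau_4$ makes the symbol sequence along such a set a uniform permutation of $[n]$, and the estimate
\[
\binom{n}{\kappa n}\cdot\frac{(\gamma n)^{\kappa n}(n-\kappa n)!}{n!}
=\frac{(\gamma n)^{\kappa n}}{(\kappa n)!}
\]
is correct. (The intermediate bound $\gamma^{\kappa n}$ you state is not literally what the random-permutation count gives you, but the product with $\binom{n}{\kappa n}$ lands in the same place, so this is only a cosmetic imprecision.)

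However, there is a genuine gap in your treatment of condition~(d). A symbol-set is, by definition, a set of $n$ cells in a single layer that \emph{all contain the same symbol}; it is not a set carrying $n$ distinct symbols, and Property~\ref{Sfileblock} says nothing of the kind (it concerns symbol-file blocks, not the multiset of symbols in a symbol-set). Consequently, averaging over $\tau_4$ does not make the symbols along a symbol-set a uniform permutation. After $\tau_1,\tau_2,\tau_3$ are fixed, every cell of a given symbol-set receives the single random label $\tau_4(s_0)$, so the $n$ conflict indicators are all driven by one common random variable: the conflict count jumps between $0$ and $n$ as $\tau_4(s_0)$ ranges over $[n]$, and no concentration in $\kappa n$ can be extracted this way. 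To repair the argument for symbol-sets you must instead fix $\tau_1,\tau_3,\tau_4$ and vary, say, $\tau_2$. Then the positions of the symbol-set cells in a fixed row layer are permuted, and for each file index $k$ the column index of the unique cell of the symbol-set in that file is uniformly random; since a fixed symbol appears in at most $\gamma n$ cells of any row $R_{i,k}$, each such cell is a conflict with probability at most $\gamma$, and the identical union bound gives $\frac{(\gamma n)^{\kappa n}}{(\kappa n)!}$. This is implicitly what the paper does: its proof of (a) varies $\tau_2$ while the other three are fixed, and the unstated ``similar estimates'' for (b)--(e) vary whichever of the four permutations gives independence for that condition; a single blanket choice of $\tau_4$ does not cover all five conditions.

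Your discussion of (f) is only a heuristic reverse-engineering of the displayed exponent, but the paper itself defers this computation entirely to Lemma~22 of~\cite{JohanKlasLan}, so I cannot fault you for not filling in the details. I do note that your proposed use of negative association among the eight coupled conflict events of a subcube, together with the Section~2 block structure to decouple by coordinate pairs, is at least consistent with the shape of the bound and with the paper's remark that the argument is ``similar but more involved.''
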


The proof of this lemma is virtually identical to the proof of Lemma
22 in \cite{JohanKlasLan}. For completeness, we provide a brief sketch of the proof,
for details, see \cite{JohanKlasLan}.

\begin{proof} (Sketch.)
Let $X_a$, $X_b$, $X_c$, $X_d$, $X_e$ and $X_f$ be the number of permutations which do not fulfill the conditions
$(a)$, $(b)$, $(c)$, $(d)$, $(e)$ and $(f)$, respectively. Let $X$ be the number of permutations satisfying the 
six conditions $(a)$, $(b)$, $(c)$, $(d)$, $(e)$ and $(f)$. There are $(n!)^4$ ways to permute the row layers,
the column layers, the file layers and the symbols, so we have
$$X \geq (n!)^4 - X_a - X_b - X_c - X_d - X_e - X_f.$$
So for proving the lemma, it suffices to show that $X$ is greater than $0$.
To give the reader an idea of the flavour of the proof,
we shall briefly demonstrate how $X_a$ can be estimated.

%To estimate $X_a$, a
Assume that for any fixed permutation 
$(\tau_1,\tau_3,\tau_4)$ of the row layers,
the file layers and the symbols, 
at most $N_a$ choices of a permutation $\tau_2$ of the column layers yield a quadruple
$(\tau_1, \tau_2, \tau_3,\tau_4)$ of permutations 
that break condition $(a)$; so $X_a \leq n!n!n!N_a$.

Let $R$ be a fixed row chosen arbitrarily; 
we count the number of ways a permutation $\tau_2$ of the column layers 
can be constructed so that $(a)$ does not hold on row $R$.
Let $S$ be a set of size $\kappa n$ of column layers of $A$;
there are $n \choose \kappa n$ ways to choose $S$. 
In order to have a conflict at cell $(i,j,k)$ of $R$, the column layers 
should be permuted
in such a way that in the resulting Latin cube $L$, $L(i,j,k) \in A(i,j,k)$. 
Since $|A(i,j,k)| \leq \gamma n$, there are at most $(\gamma n)^{\kappa n}$ 
ways of choosing which column 
layers of $B$ are mapped by $\tau_2$ to column layers in $S$ so that 
all cells on row $R$ that are in $S$ are conflicts. The rest of the column 
layers can be arranged in any of
the $(n-\kappa n)!$ possible ways. In total this gives at most
$${n \choose \kappa n}(\gamma n)^{\kappa n}(n-\kappa n)! 
= \dfrac{n!(\gamma n)^{\kappa n}}{(\kappa n)!}$$
permutations $\tau_2$ that do not satisfy condition $(a)$ on row $R$.
There are $n^2$ rows in $B$, so we have 
$$N_a \leq n^2 \dfrac{n!(\gamma n)^{\kappa n}}{(\kappa n)!}$$
and $$X_a \leq n!n!n!N_a \leq n^2(n!)^4\dfrac{(\gamma n)^{\kappa n}}{(\kappa n)!}.$$

Using similar estimates for $X_b, \dots, X_e$, one may then deduce that
 $$X_a + X_b + X_c +X_d + X_e  \leq 7n^2(n!)^4\dfrac{(\gamma n)^{\kappa n}}{(\kappa n)!};$$
for details, see the proof of  Lemma 22 in \cite{JohanKlasLan}.

The argument for estimating $X_f$ uses similar
counting arguments but is more involved; here as well the details can be found in the proof of
Lemma 22 in \cite{JohanKlasLan}, and we omit them here.
It turns out that
$$X_f \leq %(n!)^2N_f \leq 
3n^3 (n!)^4 \dfrac{{(2\gamma n)^{(1/2-\alpha-2\gamma)n/3}}}
{((1/2-\alpha-2\gamma)n/3)!}$$
Summing up, we conclude that
\vspace{1.3cm}
$$X \geq (n!)^4 - 7n^2 (n!)^4\dfrac{(\gamma n)^{\kappa n}}{(\kappa n)!} - 3n^3 (n!)^4 \dfrac{{(2\gamma n)^{(1/2-\alpha-2\gamma)n/3}}}{((1/2-\alpha-2\gamma)n/3)!}$$
$$\geq (n!)^4 \Big(1 - 7n^2 \dfrac{(\gamma n)^{\kappa n}}{(\kappa n)!} - 3n^3 \dfrac{{(2\gamma n)^{(1/2-\alpha-2\gamma)n/3}}}{((1/2-\alpha-2\gamma)n/3)!}\Big)$$
By \eqref{eq:param}, $X$ is strictly greater than $0$, provided
that $n$ is large enough.
\end{proof}

\begin{lemma}
Let $L$ be a Latin cube that is isotopic to a starting Latin cube,
and let $A$ be an $(m,m,m,m)$-cube; both of order $n=2t$.
Furthermore, let $\alpha, \gamma, \kappa, \theta, \epsilon$ be 
constants such that $\epsilon n \geq 3$ and

$$\alpha n-21\kappa n -7\epsilon n-\dfrac{90\kappa}{\epsilon}n-\dfrac{24\theta}{\epsilon} n-\dfrac{544\kappa}{\theta} n-25>0.$$

If $L$ has the following properties:
\begin{itemize}
\item[(a)] no row in $L$ contains more than $\kappa n$ conflicts with $A$;
\item[(b)] no column in $L$ contains more than $\kappa n$ conflicts with $A$;
\item[(c)] no file in $L$ contains more than $\kappa n$ conflicts with $A$;
\item[(d)] no symbol-set in $L$ contains more than $\kappa n$ conflicts with $A$;
\item[(e)] no transversal-set in $L$ contains more than $\kappa n$ conflicts 
with $A$;
\item[(f)] each cell of $L$ belongs to at least $\alpha n$ allowed subcubes;
\end{itemize}
then there is a set of disjoint allowed subcubes such that each conflict 
of $L$ belongs to one of them. Thus, by performing a number 
of swaps on subcubes in $L$, 
we obtain a Latin cube $L'$ that avoids $A$.
\end{lemma}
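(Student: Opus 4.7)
The plan is to prove the lemma by a greedy construction. I process the conflicts of $L$ in some fixed order, maintaining a family $\mathcal{F}$ of pairwise disjoint allowed subcubes. Whenever the current conflict $c$ is not already contained in some member of $\mathcal{F}$, I select an allowed subcube through $c$ that is disjoint from every existing member of $\mathcal{F}$, and add it to $\mathcal{F}$. Once the process terminates, performing a swap on each subcube of $\mathcal{F}$ eliminates every conflict (because each chosen subcube is allowed) without interference between swaps (because the subcubes are pairwise disjoint), yielding a Latin cube $L'$ that avoids $A$.

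The task therefore reduces to showing that at each step a suitable subcube through $c$ exists. Condition (f) supplies at least $\alpha n$ allowed subcubes through $c$ to start from. Call such a subcube \emph{bad} if it meets some $S'\in\mathcal{F}$. Since any two cells with pairwise distinct row, column and file coordinates lie in at most one common subcube, each individual cell of a previously chosen $S'$ generates only a bounded number of bad subcubes through $c$; and only those $S'\in\mathcal{F}$ whose cells lie in the rows, columns, files, symbol-sets or transversal-sets through $c$ can contribute at all. This reduces the problem to counting the cells of $\bigcup\mathcal{F}$ lying in these structured neighbourhoods of $c$.

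To turn this into the quantitative bound in the hypothesis, I would classify the members of $\mathcal{F}$ according to how they meet a neighbourhood of $c$. The direct conflict bounds (a)--(e) control the number of conflicts in any row, column, file, symbol-set, or transversal-set through $c$ by $\kappa n$; multiplied by the bounded number of bad subcubes per cell, these account for the $21\kappa n$ term. For previously chosen subcubes that meet $c$ only via non-conflict cells lying in a common half column block, half transversal block or half symbol-row block determined by $c$, I would introduce an $\epsilon$-threshold and a $\theta$-threshold splitting the relevant blocks into heavy and light. Heavy blocks are few by (a)--(e) and contribute the $(90\kappa/\epsilon)n$ and $(544\kappa/\theta)n$ terms, while light blocks contribute the sparse terms $7\epsilon n$ and $(24\theta/\epsilon)n$; the additive $25$ absorbs small boundary effects. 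The hypothesis then guarantees that the number of bad subcubes is strictly less than $\alpha n$, so a good choice is always available.

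The principal obstacle is the case analysis behind these last contributions, which relies essentially on the block-theoretic machinery developed in Section~2. The disjointness of half column blocks, half transversal blocks, and half symbol-row blocks established in Properties \ref{prophalfcolumnblock}, \ref{prophalftransblock} and \ref{propsymbolblock} is what allows each $S'\in\mathcal{F}$ to be charged unambiguously to a single neighbourhood counter of $c$, thereby avoiding double counting; the additional structure in Properties \ref{tsubcube}--\ref{Sfileblock} is what ensures that the number of subcubes through $c$ sharing a prescribed auxiliary set with $S'$ is bounded by a constant. Once this classification is in place, the remaining estimates reduce to elementary manipulations of (a)--(e) with the parameters $\epsilon$ and $\theta$. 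Conceptually, the whole argument is a three-dimensional counterpart of the intercalate-swap proof for Latin squares in \cite{Andren2010latin}, amplified by the richer block structure of Latin cubes and following the same outline as the corresponding lemma in \cite{JohanKlasLan}.
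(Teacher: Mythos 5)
Your proposal follows essentially the same approach as the paper's proof: a greedy construction of a set $S$ of pairwise disjoint allowed subcubes, one per conflict cell, with the two-level $\theta$/$\epsilon$ thresholds governing ``overloaded'' layers/blocks and rows/columns/files/symbol-sets/transversal-sets, and the block machinery of Section~2 (half column blocks, half transversal blocks, half symbol-row blocks, and their disjointness properties) providing the structured neighbourhoods. The parameter split $21\kappa n + 7\epsilon n$, $(90\kappa + 24\theta)n/\epsilon$, $544\kappa n/\theta$ in the hypothesis corresponds exactly to the three stages of eliminated choices you describe.

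One point should be made explicit rather than left implicit, because it is where the proof would otherwise fail. The greedy step must \emph{actively avoid} overloaded structures when selecting the new subcube, not merely classify and count them afterwards. It is precisely this avoidance that maintains, throughout the whole construction, the self-referential invariants that close the estimate: once a layer or block is overloaded it receives no further cells from being chosen as the ``other'' layer, so its final used-cell count stays at most about $4\kappa n^2 + \theta n^2$; similarly each row, column, file, symbol-set and transversal-set stays at most about $2\kappa n + \epsilon n$. Without these caps the later greedy steps could face far more than $\alpha n$ bad subcubes. Related to this, your statement that ``heavy blocks are few by (a)--(e)'' is misattributed: the number of overloaded blocks is bounded because the \emph{total} number of used cells is at most $8\kappa n^3$ (eight cells per subcube, one subcube per conflict, at most $\kappa n^3$ conflicts by (a)), so a $\theta n^2$-threshold admits at most $8\kappa n/\theta$ overloaded blocks of each type; conditions (b)--(e) enter only through the per-cell step, where each of the seven auxiliary cells of the candidate subcube must avoid conflicts and previously used cells in its own column/symbol-set/transversal-set.
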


\begin{proof}
For constructing $L'$ from $L$, we will perform a number of swaps on subcubes,
and we shall refer to this procedure as
\textit{$S$-swap}.  We are going to construct a set $S$ of disjoint allowed 
subcubes such that each conflict of $L$ with $A$ belongs to one of them. 
A cell that belongs to a subcube in $S$ is called \textit{used} in $S$-swap. 
Since no row in $L$ contains more than $\kappa n$ conflicts with $A$, there are at most $\kappa n^3$ conflicts in $L$, which implies 
that the total number of cells that are used in 
$S$-swap is at most $8 \kappa n^3$. 
Furthermore, every block and layer contains at most $\kappa n^2$ conflicts,
and
every half block contains at most $\kappa n^2/2$ conflicts.

A row layer, a column layer, a file layer, a row block, a file block, a symbol block, a symbol-column block, 
or a symbol-file block is \textit{overloaded} if such a layer or block
contains at least $\theta n^2$ cells that are used in $S$-swap; note that no more than $\dfrac{8\kappa n^3}{\theta n^2}=\dfrac{8\kappa}{\theta} n$ 
layers or blocks of each type are overloaded. 

A first (second) half column block, a first (second) half transversal block, or a first (second) half symbol-row block
is \textit{overloaded} if such a half block contains at least $\theta n^2 / 2$ cells that are used in $S$-swap;
note that no more than $\dfrac{8\kappa n^3}{\theta n^2 / 2}=\dfrac{16\kappa}{\theta} n$ 
half blocks of each type are overloaded. 

A row, a column, a file, a transversal-set, 
or a symbol-set is \textit{overloaded} if 
this row, column, file, transversal-set or symbol-set contains 
at least $\epsilon n$ cells that are used in $S$-swap.
A half column, a half transversal-set, or a half symbol-set is \textit{overloaded} 
if this half column, half transversal-set, or half symbol-set
contains at least $\epsilon n / 2$ cells that are used in $S$-swap.

Using these facts, let us now construct our set $S$ by steps; at each step we consider a conflict cell $(i_1,j_1,k_1)$ and include an allowed subcube 
containing $(i_1,j_1,k_1)$ in $S$. Initially, the set $S$ is empty.

So let us consider a conflict cell $(i_1,j_1,k_1)$ in $L$; there are at least $\alpha n$ allowed subcubes containing $(i_1,j_1,k_1)$. 
We choose an allowed subcube 
$$\mathcal{C}=\{(i_1,j_1,k_1),  (i_1,j_2,k_1),  (i_2,j_1,k_1), 
(i_2,j_2,k_1),  (i_1,j_1,k_2), (i_1,j_2,k_2), (i_2,j_1,k_2), 
(i_2,j_2,k_2)\}$$ that satisfies the following:
\begin{itemize}

\item[(1)] The row layer $i_2$, the column layer $j_2$, the file layer $k_2$, the row block containing the row $R_{i_2,k_1}$,
the file block containing the file $F_{i_1,j_2}$, the symbol-column block containing the two cells 
$(i_2,j_1,k_1)$ and $(i_1,j_1,k_2)$, the symbol-file block containing 
the two cells $(i_1,j_2,k_1)$ and $(i_2,j_1,k_1)$, and
the symbol block containing symbol $L(i_1,j_2,k_1)$ are not overloaded. 
This eliminates at most $\dfrac{8 \times 8\kappa}{\theta} n=\dfrac{64\kappa}{\theta} n$ choices.

The four first (second) half column blocks each of which contains 
one cell from
the set
$\mathcal{X}=\{(i_1,j_2,k_1), $ $(i_2,j_2,k_1), $ $(i_1,j_1,k_2), $ $(i_2,j_1,k_2)\}$ are not overloaded.
This eliminates at most $\dfrac{2 \times 4 \times 16\kappa}{\theta} n=\dfrac{128\kappa}{\theta} n$ choices.
Similarly, we need that the four first (second) half transversal blocks 
each of which contains 
one cell from
the set $\mathcal{X}$, the four first (second) half symbol-row blocks 
each of which contains 
one cell from
the set $\mathcal{X}$ are not overloaded.
This eliminates at most $\dfrac{256\kappa}{\theta} n$ choices.
The first half column block containing $(i_2,j_2,k_2)$ and the second half column block containing $(i_1,j_2,k_2)$,
the first half transversal block containing $(i_2,j_1,k_1)$ and the second half transversal block containing $(i_1,j_2,k_2)$,
the first half symbol-row block containing $(i_2,j_2,k_2)$ and the second half symbol-row block containing $(i_2,j_1,k_1)$
are not overloaded. This eliminates at most $\dfrac{6 \times 16\kappa}{\theta} n=\dfrac{96\kappa}{\theta} n$ choices.

With this strategy for including subcubes in $S$, 
after completing the construction of $S$, every layer (or block) contains at most 
$4\kappa n^2 + (\theta n^2-1) +4$ cells that are used in $S$-swap.
Consider any first half column block $\mathcal{B}_1$, and 
let $\mathcal{B}_2$ be the second
half column block that corresponds uniquely to $\mathcal{B}_1$, that is,
$\mathcal{B}_2 \cup \mathcal{B}_1$
together form a set of $t$ columns, and let $\mathcal{T}_b$ be the first transversal block that determines
$\mathcal{B}_1$ (such a first transversal block exists
by Property \ref{blockdetermine}). Now,  a cell $(i,j,k)$ of $\mathcal{B}_1$
is used in $S$-swap if
$$\mathcal{C}=\{(i,j,k),  (i,j_x,k),  (i_x,j,k), (i_x,j_x,k),  (i,j,k_x), (i,j_x,k_x), (i_x,j,k_x), (i_x,j_x,k_x)\}$$
is a subcube and:

\begin{itemize}

\item $(i,j,k)$ is a conflict; there are at most $\kappa n^2 /2$ conflicts in 
$\mathcal{B}_1$; or

\item $(i_x,j,k)$ is a conflict cell of $\mathcal{B}_2$; there 
are at most $\kappa n^2 /2$ conflicts in $\mathcal{B}_2$; or

\item $(i,j_x,k_x)$ is a conflict cell of $\mathcal{T}_b$;
there are at most $\kappa n^2 /2$ conflicts in $\mathcal{T}_b$; or

\item one of the five cells $\{(i,j_x,k), (i_x,j_x,k),  (i,j,k_x), (i_x,j,k_x), (i_x,j_x,k_x)\}$ is a conflict.
\end{itemize}

Hence, after completing the construction of $S$, 
every first half column block contains at most 
$3 \times (\kappa n^2 /2)+ (\theta n^2 /2 -1) +1$ cells that are used in $S$-swap.
Similarly, every second half column block, every first (second) transversal block, every first (second) symbol-row
block contains at most $3\kappa n^2 /2+ \theta n^2 /2$ cells that are used in $S$-swap.

We conclude that the number 
of overloaded rows (overloaded columns, overloaded files, 
overloaded transversal-sets or overloaded symbol-sets) in each layer (or block)  
is at most $\dfrac{4\kappa n^2 + \theta n^2 +3}{\epsilon n} \leq \dfrac{4\kappa + \theta}{\epsilon} n+1$. 
We also have that the number of overloaded half columns, half transversal-sets, half symbols-sets in each half block
is at most $\dfrac{3\kappa n^2/2 + \theta n^2 /2}{\epsilon n /2} =\dfrac{3\kappa + \theta}{\epsilon}n$ 
conflicts.

Note that here the statement ``each symbol block contains at most 
$\dfrac{4\kappa + \theta}{\epsilon} n+1$ overloaded symbol-sets''
is to be taken with respect to either row layers, column layers
or file layers, i.e., when we consider the $n$ different symbol sets of a given symbol
block belonging  to $n$ different row layers
(or $n$ different column layers or $n$ different file layers), 
the number of overloaded such symbol-sets is at most
$\dfrac{4\kappa + \theta}{\epsilon} n+1$.

\item[(2)] Some rows, columns, files, transversal-sets, symbol-sets, 
half columns, half symbol-sets, half transversal-sets are not overloaded 
according to the following:
\begin{itemize}
\item[(2a)] The rows $R_{i_2,k_1}$, $R_{i_1,k_2}$, $R_{i_2,k_2}$ are not overloaded;
this eliminates at most $\dfrac{12\kappa + 3\theta}{\epsilon} n+3$ choices
since in the file layer $k_1$ (which contains the row $R_{i_2,k_1}$) and 
in the row layer $i_1$ (which contains the row $R_{i_1,k_2}$) 
and in the row block which contains the row $R_{i_1,k_1}$ 
(which also contains the row $R_{i_2,k_2}$), 
there are in total at most $\dfrac{4\kappa + \theta}{\epsilon} n+1$ overloaded rows.
Similarly, we need that the files $F_{i_1,j_2}$, $F_{i_2,j_1}$,
$F_{i_2,j_2}$ are not overloaded; 
this eliminates at most $\dfrac{12\kappa + 3\theta}{\epsilon} n+3$ choices.

The columns $C_{j_2,k_1}$, $C_{j_1,k_2}$ are not overloaded; 
this eliminates at most $\dfrac{8\kappa + 2\theta}{\epsilon} n+2$ choices
since in the file layer $k_1$ (which contains the column 
$C_{j_2,k_1}$) and in the column layer $j_1$ 
(which contains the column $C_{j_1,k_2}$); there are in total 
at most $\dfrac{4\kappa + \theta}{\epsilon} n+1$ overloaded columns.

The two half columns of the column $C_{j_2,k_2}$ are not overloaded; 
this eliminates at most $\dfrac{6\kappa + 2\theta}{\epsilon} n$ choices
since in the first half column block determined by $(i_1,j_1,k_1)$ (which contains one half column of $C_{j_2,k_2}$)
and in the second half column block determined by $(i_1,j_1,k_1)$ (which contains the other half column of $C_{j_2,k_2}$);
there are in total at most $\dfrac{3\kappa + \theta}{\epsilon} n$ overloaded half columns.
Note that this condition also implies that the column $C_{j_2,k_2}$ is not overloaded,
since if $C_{j_2,k_2}$ is overloaded, then it contains at least $\epsilon n$ 
cells used  in $S$-swap,
which implies that one half column of $C_{j_2,k_2}$ must contain 
at least $\epsilon n /2$ cells used in $S$-swap, a contradiction.

\item[(2b)] The transversal-set $\mathfrak{t}_1$ containing $(i_1,j_2,k_1)$ and 
$(i_2,j_1,k_2)$ is not overloaded; this eliminates at most $\dfrac{4\kappa + \theta}{\epsilon} n+1$ 
choices, since in the row block which contains the row $R_{i_1,k_1}$ 
(which also contains the transversal-set $\mathfrak{t}_1$), there are at 
most $\dfrac{4\kappa + \theta}{\epsilon} n+1$ overloaded transversal-sets. 
Similarly, we need that the transversal-set containing $(i_2,j_2,k_1)$ 
and $(i_1,j_1,k_2)$ is not overloaded; this eliminates at most $\dfrac{4\kappa + \theta}{\epsilon} n+1$ choices.

The two half transversal-sets of the transversal-set $\mathfrak{t}_2$ 
containing $(i_2,j_1,k_1)$ 
and $(i_1,j_2,k_2)$, respectively, are not overloaded (this condition implies that
$\mathfrak{t}_2$ is not overloaded); this eliminates at most $\dfrac{6\kappa + 2\theta}{\epsilon} n$ 
choices, since in the first half transversal block determined by $(i_1,j_1,k_1)$
(which contains one half transversal-set of $\mathfrak{t}_2$) and 
in the second half transversal block determined by $(i_1,j_1,k_1)$
(which contains the other half transversal-set of $\mathfrak{t}_2$), there are at 
most $\dfrac{3\kappa + \theta}{\epsilon} n$ overloaded half transversal-sets.

\item[(2c)] The symbol-set $\mathfrak{s}_1$ containing $(i_2,j_1,k_2)$ and $(i_1,j_2,k_2)$ is not overloaded;
this eliminates at most $\dfrac{4\kappa + \theta}{\epsilon} n+1$ choices,
since in the symbol block which contains $(i_1,j_1,k_1)$ (which also contains the symbol-set $\mathfrak{s}_1$), there are 
at most $\dfrac{4\kappa + \theta}{\epsilon} n+1$ overloaded symbol-sets. Similarly, we need that the symbol-set
containing $(i_2,j_2,k_1)$ and $(i_1,j_2,k_2)$, and
the symbol-set containing $(i_2,j_2,k_1)$ 
and $(i_2,j_1,k_2)$ are not overloaded,
this eliminates at most $\dfrac{8\kappa + 2\theta}{\epsilon} n+2$ choices.
 
\item[(2d)] The symbol-set $\mathfrak{s}_2$ containing 
$(i_1,j_2,k_1)$ and $(i_2,j_1,k_1)$ is not overloaded. 
This eliminates at most $\dfrac{4\kappa + \theta}{\epsilon} n+1$ choices, since in the file layer $k_1$ 
(which contains the symbol-set $\mathfrak{s}_2$), there are at most $\dfrac{4\kappa + \theta}{\epsilon} n+1$ overloaded symbol-sets. 
Similarly, we need that the symbol-set containing $(i_1,j_1,k_2)$ and $(i_1,j_2,k_1)$, 
the symbol-set containing $(i_2,j_1,k_1)$ and $(i_1,j_1,k_2)$ are not overloaded. 
This eliminates at most $\dfrac{8\kappa + 2\theta}{\epsilon} n+2$ choices.

\item[(2e)] The symbol-set $\mathfrak{s}_3$ containing 
$(i_1,j_2,k_1)$ and $(i_2,j_2,k_2)$ is not overloaded. 
This eliminates at most $\dfrac{4\kappa + \theta}{\epsilon} n+1$ 
choices, since in the symbol-column block which contains $(i_1,j_1,k_1)$ 
(which also contains symbol-set $\mathfrak{s}_3$), there are at most $\dfrac{4\kappa + \theta}{\epsilon} n+1$ overloaded symbol-sets. 
Similarly, we need that the symbol-set containing $(i_1,j_1,k_2)$ and $(i_2,j_2,k_2)$, is not overloaded;
this eliminates at most $\dfrac{4\kappa + \theta}{\epsilon} n+1$ choices.

The two half symbol-sets of the symbol-set $\mathfrak{s}_4$ containing
$(i_2,j_1,k_1)$ and $(i_2,j_2,k_2)$ are not overloaded
(this condition implies that $\mathfrak{s}_4$ is not overloaded);
this eliminates at most $\dfrac{6\kappa + 2\theta}{\epsilon} n$ 
choices, since in the first half symbol-row block determined by $(i_1,j_1,k_1)$
(which contains one half symbol-set of $\mathfrak{s}_4$) and 
in the second half symbol-row block determined by $(i_1,j_1,k_1)$
(which contains the other half symbol-set of $\mathfrak{s}_4$), there are at 
most $\dfrac{3\kappa + \theta}{\epsilon} n$ overloaded half symbol-sets.

\end{itemize}
So in total, this eliminates at most $\dfrac{90\kappa + 24\theta}{\epsilon} n+18$ choices.
Note that with this strategy for including subcubes in $S$, after completing the construction of $S$, every row,
column, file, transversal-set, and symbol-set contains at most $2 \kappa n +(\epsilon n-1)+2=2 \kappa n +\epsilon n+1$ cells that are used in $S$-swap.

\item[(3)] Except for $(i_1,j_1,k_1)$, none of the cells in $\mathcal{C}$ are conflicts or have been used before in $S$-swap.
\begin{itemize}

\item[(3a)] The cell $(i_2,j_1,k_1)$ is not a conflict and has not been
used before in $S$-swap; 
this eliminates at most $3\kappa n +\epsilon n+1$ choices since the column 
$C_{j_1,k_1}$ contains at most $\kappa n$ conflict cells and at most $2 \kappa n +\epsilon n+1$ cells that are used in $S$-swap. 
Similarly, we need that the cell $(i_1,j_2,k_1)$ and the cell $(i_1,j_1,k_2)$ 
are not conflicts and has not used before in $S$-swap; in total, 
this eliminates at most $6\kappa n +2\epsilon n+2$ choices.

\item[(3b)] The cell $(i_1,j_2,k_2)$ is not a conflict and has not been
used before in $S$-swap. This eliminates at most 
$3\kappa n +\epsilon n+1$ choices, since in the symbol-set 
in row layer $i_1$ that contains the cell
$(i_1,j_1,k_1)$, there are at most $\kappa n$ conflict cells and at most $2 \kappa n +\epsilon n+1$ cells that have been used in $S$-swap. 
Similarly, we need that the cell 
$(i_2,j_1,k_2)$ and the cell $(i_2,j_2,k_1)$ are not conflicts and has not been
used before in $S$-swap; in total, this eliminates at most 
$6\kappa n +2\epsilon n+2$ choices.

\item[(3c)] The cell $(i_2,j_2,k_2)$ is not a conflict and has not been
used before in $S$-swap. This eliminates at most $3\kappa n +\epsilon n+1$ choices since in the transversal-set containing the cell $(i_1,j_1,k_1)$, there are at most $\kappa n$ conflict cells and at most $2 \kappa n +\epsilon n+1$ cells that are used in $S$-swap. 
\end{itemize}
So in total, this eliminates at most $21\kappa n +7\epsilon n+7$ choices. 
\end{itemize}

It follows that we have at least
$$\alpha n-21\kappa n -7\epsilon n-\dfrac{90\kappa}{\epsilon}n-\dfrac{24\theta}{\epsilon} n-\dfrac{544\kappa}{\theta} n-25$$
choices for an allowed subcube $\mathcal{C}$ which contains $(i_1,j_1,k_1)$. 
By \eqref{eq:param}, this expression is greater than zero if
$n$ is large enough,
so we can conclude that there is a subcube satisfying these conditions. Thus we may construct the set $S$ by iteratively adding disjoint allowed subcubes such that each subcube contains exactly one conflict cell.

After this process terminates, we have a set $S$ of disjoint subcubes; 
we swap on all subcubes in $S$ to obtain the Latin cube $L'$. 
Hence, we conclude that we can obtain a Latin cube $L'$ that avoids $A$.
\end{proof}

\end{document}